\documentclass[pdflatex,sn-mathphys-num]{sn-jnl}% Math and Physical Sciences Numbered Reference Style
%%\documentclass[pdflatex,sn-mathphys-ay]{sn-jnl}% Math and Physical Sciences Author Year Reference Style
%%\documentclass[pdflatex,sn-aps]{sn-jnl}% American Physical Society (APS) Reference Style
%%\documentclass[pdflatex,sn-vancouver-num]{sn-jnl}% Vancouver Numbered Reference Style
%%\documentclass[pdflatex,sn-vancouver-ay]{sn-jnl}% Vancouver Author Year Reference Style
%%\documentclass[pdflatex,sn-apa]{sn-jnl}% APA Reference Style
%%\documentclass[pdflatex,sn-chicago]{sn-jnl}% Chicago-based Humanities Reference Style

%%%% Standard Packages
%%<additional latex packages if required can be included here>

\usepackage{graphicx}%
\usepackage{multirow}%
\usepackage{amsmath,amssymb,amsfonts}%
\usepackage{amsthm}%
\usepackage{mathrsfs}%
\usepackage[title]{appendix}%
\usepackage{xcolor}%
\usepackage{textcomp}%
\usepackage{manyfoot}%
\usepackage{booktabs}%
\usepackage{algorithm}%
\usepackage{algorithmicx}%
\usepackage{algpseudocode}%
\usepackage{listings}%
\usepackage{bm}
%%%%

%%%%%=============================================================================%%%%
%%%%  Remarks: This template is provided to aid authors with the preparation
%%%%  of original research articles intended for submission to journals published 
%%%%  by Springer Nature. The guidance has been prepared in partnership with 
%%%%  production teams to conform to Springer Nature technical requirements. 
%%%%  Editorial and presentation requirements differ among journal portfolios and 
%%%%  research disciplines. You may find sections in this template are irrelevant 
%%%%  to your work and are empowered to omit any such section if allowed by the 
%%%%  journal you intend to submit to. The submission guidelines and policies 
%%%%  of the journal take precedence. A detailed User Manual is available in the 
%%%%  template package for technical guidance.
%%%%%=============================================================================%%%%

%% as per the requirement new theorem styles can be included as shown below
\numberwithin{equation}{section}
\newtheorem{theorem}{Theorem}[section]
\newtheorem{proposition}{Proposition}[section]

\newtheorem{definition}{Definition}[section]
\newtheorem{lemma}{Lemma}[section]
\newtheoremstyle{remarkstyle}
  {3pt}      % 上方空间
  {3pt}      % 下方空间
  {\normalfont}  % 正文字体
  {}         % 缩进
  {\itshape} % 标题字体 Remark
  {.}        % 标题后面跟一个句号
  {0.5em}    % 标题与正文的间距
  {}         % 标题格式（保持默认）
\theoremstyle{remarkstyle}
\newtheorem{remark}{Remark}[section]

\newcommand\br{\mathbb{R}}

\newcommand\bbt{\mathbb{T}}
\newcommand\dd{\,\mathrm{d}}

\raggedbottom
%%\unnumbered% uncomment this for unnumbered level heads

\begin{document}

\title[Article Title]{Explicit symmetric low-regularity integrators for the semilinear Klein-Gordon equation}

%%=============================================================%%
%% GivenName	-> \fnm{Joergen W.}
%% Particle	-> \spfx{van der} -> surname prefix
%% FamilyName	-> \sur{Ploeg}
%% Suffix	-> \sfx{IV}
%% \author*[1,2]{\fnm{Joergen W.} \spfx{van der} \sur{Ploeg} 
%%  \sfx{IV}}\email{iauthor@gmail.com}
%%=============================================================%%

\author[1]{\fnm{Zhirui} \sur{Shen}}\email{zrshen@stu.xjtu.edu.cn}

\author*[1]{\fnm{Bin} \sur{Wang}}\email{wangbinmaths@xjtu.edu.cn}

\affil[1]{\orgdiv{Mathematics and Statistics}, \orgname{Xi'an Jiaotong University}, \orgaddress{\city{Xi'an}, \postcode{710049}, \country{China}}}

%\affil[2]{\orgdiv{Mathematics and Statistics}, \orgname{Xi'an Jiaotong University}, \orgaddress{\city{Xi'an}, \postcode{710049}, \country{Country}}}

%%==================================%%
%% Sample for unstructured abstract %%
%%==================================%%

\abstract{This paper is concerned with the design and analysis of symmetric low-regularity integrators for the semilinear Klein-Gordon equation. We first propose a general symmetrization procedure that allows for the systematic construction of symmetric schemes from existing explicit (non-symmetric) integrators. Applying this procedure, we derive two novel schemes. Error analyses show that both integrators achieve their optimal convergence orders in the energy space under significantly relaxed regularity assumptions. Furthermore, the symmetry property ensures that the convergence order of a first-order symmetric scheme improves as the regularity of the exact solution increases. A numerical experiment demonstrates that the proposed second-order symmetric scheme nearly preserves the system energy over extended periods.}

\keywords{Low-regularity integrator, Error estimate, Klein-Gordon equation, Explicit symmetric methods}

%%\pacs[JEL Classification]{D8, H51}

\pacs[MSC Classification]{35L70, 65M12, 65M15, 65M70}

\maketitle

\section{Introduction}\label{sec1}
 In this paper, we focus on the low-regularity numerical approximation of the nonlinear Klein-Gordon equation (NKGE) on the torus $\mathbb{T}^d$ ($d=1,2,3$) of the form
\begin{equation}\label{NKGE}
    \begin{cases}
    \partial_{t t} u(x,t)-\Delta u(x,t)=f(u(x,t)),  & (x,t) \in \mathbb{T}^d  \times(0, T], \\
     u(x,0)=u^0(x),\ \partial_t u(x,0)=v^0(x), & x \in \mathbb{T}^d,
     \end{cases}
 \end{equation}
 where $x\in \mathbb{T}^d$ denotes the spatial coordinate, $t\in (0, T]$ represents time, $u(x,t)$ is a complex-valued scalar field, and the nonlinear function $f(u):\mathbb{R}\to\mathbb{R}$ is the negative derivative of a potential function $V$, i.e., $f(u)=-V'(u)$. This equation arises in various physical contexts, including magnetic-flux propagation, Bloch-wall dynamics, dislocation propagation in solids and liquid crystals, and the propagation of ultra-short optical pulses in two-level media. As a Hamiltonian PDE, the NKGE preserves the total energy
 \begin{equation}
     H(t)=\int_{\mathbb{T}^d}\left(\frac12|\partial_t u|^2+\frac12|\nabla u|^2+V(u)\right)\mathrm{d}x\equiv H(0).
 \end{equation}

The numerical approximation of the NKGE poses significant challenges, particularly for solutions with limited smoothness (low regularity). Classical numerical methods, including finite difference time domain (FDTD) schemes \cite{Yi2020Optimal,Cai2018Error,Bao2012Analysis}, trigonometric and exponential integrators \cite{Buchholz2021,Wang2019,Bao2014}, splitting methods \cite{Kropielnicka2024Effective,Baumstark2018Uniformly}, and structure-preserving geometric integrators \cite{Cohen2008,Chen2020,Hairer2006}, often struggle under these conditions. When the solution belongs to a Sobolev space $H^\alpha\times H^{\alpha-1}$ with a relatively small exponent $\alpha$, these classical approaches generally require higher regularity (e.g., $H^2(\mathbb T)\times H^1(\mathbb T)$ or higher) to achieve optimal convergence in the natural energy space $H^1(\mathbb T)\times L^2(\mathbb T)$. Consequently, for low-regularity solutions, conventional methods either fail to achieve the expected order of convergence or rely on impractically strict regularity assumptions, thereby limiting their applicability in scientific computation.

In recent years, low-regularity integrators (LRIs) have emerged as a powerful tool to overcome this regularity barrier. The pioneering work of Ostermann and Schratz~\cite{Ostermann2018} introduced a class of exponential integrators capable of handling nonsmooth solutions of the Schr{\"o}dinger equation. Building upon this foundation, numerous low-regularity integrators have since been developed for various systems \cite{Ostermann2020Cubic,Schratz2020Dirac,Ning2022mKdV}.
Regarding the NKGE, Rousset and Schratz~\cite{Rousset2021} established a general framework for low-regularity integrators that achieves second-order convergence in the energy space under the regularity assumption $(u,\partial_t u)\in H^{7/4}(\mathbb T)\times H^{3/4}(\mathbb T)$. Subsequently, Li, Schratz, and Zivcovich~\cite{Li2023} constructed a second-order low-regularity correction of the Lie splitting method, which attains second-order accuracy in the energy space under the weaker requirement $(u,\partial_t u)\in H^{1+d/4}(\mathbb T)\times H^{d/4}(\mathbb T)$ for $d=1,2,3$.

It is worth noting that none of the aforementioned low-regularity integrators are symmetric. For Hamiltonian PDEs, however, symmetric integrators constitute a particularly powerful class of methods \cite{Hairer2000Long,Li2020Linearly}. These schemes combine the advantages of exponential integration with time-reversal symmetry, a property that is crucial for ensuring superior long-time behavior, such as the near-conservation of energy. Methodologically, such integrators are often constructed using trigonometric or exponential filters and implemented explicitly in physical space, typically employing Fourier pseudospectral discretization for efficiency.
Recently, Wang and Zhao \cite{Wang2022Symmetric} proposed a symmetric low-regularity integrator for the nonlinear Klein-Gordon equation. However, the approach was restricted to the cubic nonlinearity $f(u)=u^3$ in one dimension ($d=1$). Other symmetric low-regularity integrators have been developed specifically for the nonlinear Schr{\"o}dinger equation. In \cite{alama2023symmetric},  Alama Bronsard proposed a class of implicit symmetric low-regularity integrators, while Feng, Maierhofer, and Wang \cite{Feng2024} recently introduced a fully explicit low-regularity scheme for the same equation. %By employing a multi-step construction, the latter approach overcomes the significant computational cost associated with implicit single-step symmetric low-regularity schemes, while preserving both the favorable structure-preserving behavior of symmetric methods and their low-regularity convergence properties.

Despite these advances, most existing low-regularity integrators for the NKGE lack either symmetry or computational efficiency. In particular, the symmetric scheme proposed in \cite{Wang2022Symmetric} requires a relatively involved starting procedure and is restricted to the specific nonlinearity $f(u)=u^3$ in one spatial dimension ($d=1$). Conversely, the low-regularity integrators for the NKGE presented in \cite{Rousset2021,Li2023} are non-symmetric and consequently suffer from energy drift over long-time simulations. These limitations motivate the development of simple, symmetric low-regularity integrators that combine favorable convergence properties under low regularity with superior long-time conservation behavior.

The main contributions of this work are as follows:
\begin{itemize}
\item The proposed schemes are explicit, symmetric, and low-regularity. They are easy to implement, and thanks to their symmetry, they exhibit near-conservation of energy over long time intervals.
\item We demonstrate that symmetrising the classical Lie splitting scheme yields a symmetric method, denoted sLRI1, which delivers improved convergence properties compared to the original non-symmetric version, even under low-regularity conditions.
\item Based on the second-order low-regularity corrected Lie splitting method proposed in \cite{Li2023}, we design a symmetric two-step scheme named sLRI2. This new scheme retains second-order convergence in ($H^1(\mathbb T)\times L^2(\mathbb T)$) under the same low-regularity assumption ($H^{1+d/4}(\mathbb T)\times H^{d/4}(\mathbb T)$), with its symmetry ensuring excellent long-time conservation.
\end{itemize}
    
To the best of our knowledge, this is the first symmetric low-regularity exponential integrator for the NKGE with general nonlinearities. It combines a simple two-step structure (requiring only one evaluation of the nonlinearity per time step) with provable near-conservation properties over long time intervals. Numerical experiments confirm that the new scheme outperforms existing low-regularity methods in terms of both accuracy and long-time behaviour.

The remainder of the paper is organized as follows. In Section~\ref{sec:construction}, we present the construction of the symmetric low-regularity schemes. Section~\ref{sec:convergence sLRI1} provides the stability and convergence analysis of sLRI1, while Section~\ref{sec:convergence sLRI2} is dedicated to that of sLRI2. Finally, numerical experiments in Section~\ref{sec:Numerical experiments} illustrate the performance of the new integrators.

In this paper, $C(\cdot)$ denotes a generic positive constant depending on its argument $(\cdot)$. The notation $A \lesssim B$ signifies the inequality $A \le C B$, where the constant $C > 0$ is independent of the time step $\tau$ (and the spatial mesh size $h$, if applicable).

\section{Explicit symmetric low-regularity integrators}\label{sec:construction}
We rewrite the semilinear Klein-Gordon equation into the following first-order system:
\begin{equation}\label{U_t-LU=F(U)}
    \begin{cases}\partial_tU-LU=F(U), & (x,t) \in \mathbb T  \times(0, T], \\ U(t_0)=U^0, & x \in \mathbb T ,\end{cases}
\end{equation}
with 
\begin{equation}\label{U,U^0,F(U)}
    U=\begin{pmatrix}
        u\\u_t
       
       \end{pmatrix},
    \ U^0=\begin{pmatrix}
        u^0\\v^0
       
       \end{pmatrix},
    \ F(U)=\begin{pmatrix}
        0\\f(u)
       
       \end{pmatrix},
\end{equation}
and
\begin{equation}
    L:=\begin{pmatrix}
  0&I \\
  \Delta&0
\end{pmatrix}:(H^2(\mathbb T)\cap H_0^1(\mathbb T))\times H^1(\mathbb T)\to  H_0^1(\mathbb T)\times L^2(\mathbb T)
\end{equation}
is the infinitesimal generator of group $\{e^{tL}\}_{t\in\br}$. Typically, we assume that $f\in C^2(\br;\br)$ and the Lipschtz condition
\begin{equation}\label{Lip}
    \sup_{x\in\br}|f^{(j)}(x)|\le L<\infty
\end{equation}
holds for $j=1,2$.

We begin by constructing an explicit first-order symmetric numerical scheme using the variation-of-constants formula, and then outline a general procedure for deriving symmetric numerical scheme based on this construction. Recall the definition in \cite{Hairer2006} of symmetric numerical schemes:

\begin{definition}\label{sym}
    A two-step method $y_{n+1}=\Phi_{\tau}(y_n,y_{n-1})$ with the time stepsize $\tau>0$ is symmetric or time-reversible, if exchanging $n+1\leftrightarrow n-1$ and $\tau\leftrightarrow -\tau$ leaves the method unaltered.
\end{definition}

\subsection{Construction of a specific explicit symmetric LRI}
Letting $\tau>0$ is the time stepsize and $t_n=n\tau$ for $n=0,1,\cdots,[T/\tau]$, the variation-of-constant formula to (\ref{U_t-LU=F(U)}) is (we omit $x$ for brevity from now on) 
\begin{equation}\label{vcf}
    U(t_{n}+s)=e^{s L}U(t_n)+\int_{0}^{s }e^{(s-\sigma)L}F(U(t_n+\sigma))\dd\sigma,\ \forall s\in(0,\tau].
\end{equation}
Multiplying $e^{s L}$ on both sides of (\ref{vcf}) and taking $s=\tau$, $s=-\tau$, we have
\begin{align*}
        e^{-\tau L}U(t_{n+1})&=U(t_n)+\int_{0}^{\tau }e^{-sL}F(U(t_n+s))\dd s,
     \\
        e^{\tau L}U(t_{n-1})&=U(t_n)-\int_{-\tau}^{0 }e^{-sL}F(U(t_n+s))\dd s. 
\end{align*}
Subtracting the above two equations gives
\begin{equation}\label{V_n(t)-V_n(-t)}
    e^{-\tau L}U(t_{n+1})-e^{\tau L}U(t_{n-1})=\int_{-\tau }^{\tau }e^{-sL}F(U(t_n+s))\dd s.
\end{equation}
Multiplying $e^{\tau L}$ on both sides, we have
\begin{equation}\label{main}
    U(t_{n+1})=e^{2\tau L}U(t_{n-1})+\int_{-\tau }^{\tau }e^{(\tau-s)L}F(U(t_n+s))\dd s.
\end{equation}
Expanding the right side  of (\ref{main}) by Taylor's theorem yields
\begin{align}
        U(t_{n+1})=&e^{2\tau L}U(t_{n-1})+\int_{-\tau }^{\tau }e^{(\tau-s)L}F(U(t_n+s))\dd s\nonumber\\
        =&e^{2\tau L}U(t_{n-1})+\int_{-\tau }^{\tau }e^{\tau L}F(U(t_n))\dd s\nonumber\\
        &+\int_{-\tau}^{\tau} e^{(\tau-s)L}[F(U(t_n+s))-F(e^{sL}U(t_n))]\mathrm{d}s\nonumber\\
        &+\int_{-\tau }^{\tau }e^{\tau L}\int_{0}^{s}\frac{\dd}{\dd\sigma}(e^{-\sigma L}F(e^{\sigma L}U(t_n)))\dd\sigma\dd s\nonumber\\
        :=&e^{2\tau L}U(t_{n-1})+2\tau e^{\tau L}F(U(t_n))+\mathcal R_{\tau}(t_n).
\end{align}
Therefore we obtain 
\begin{equation}\label{LRI1}
    U^{n+1}\approx e^{2\tau L}U^{n-1}+2\tau e^{\tau L}F(U^n).
\end{equation}

To compute the first step $U(t_1)$, we can use a well-known first-order low regularity approximation like Lie splitting in \cite{Li2023}, i.e.,
\[
U^1:=e^{\tau L}U^0 +\tau e^{\tau L}F(U^0).
\]
This gives us the following numerical method denoted by sLRI1:
\begin{equation}\label{sLRI1}
    \begin{aligned}
       U^{n+1}=&e^{2\tau L}U^{n-1}+2\tau e^{\tau L}F(U^n),\quad n\ge1,\\
       U^1=&e^{\tau L}U^0 +\tau e^{\tau L}F(U^0).
    \end{aligned}
\end{equation}
\begin{remark}
    Exchanging $n+1\leftrightarrow n-1$ and $\tau \leftrightarrow -\tau$ gives
\begin{equation}\label{yanzheng}
    U^{n-1}=e^{-2\tau L}U^{n+1}-2\tau e^{-\tau L}F(U^n).
\end{equation}
Multiplying both sides of (\ref{yanzheng}) by $e^{2\tau L}$ yields (\ref{sLRI1}), from which it follows that the scheme is symmetric. This observation motivates the question of whether, for an arbitrary explicit low-regularity integrator, one can apply an analogous procedure to construct a corresponding symmetric method. As shown in the next subsection, the answer to this question is affirmative.

\end{remark}

\subsection{General method for constructing symmetric scheme}
From the variation-of-constants formula (\ref{vcf}), let 
\begin{equation}
    U^{n+1}=e^{\tau L}U^n+\Psi_{\tau}(U^n)+\mathcal{R}_{\tau}^n
\end{equation}
be a prescribed exponential integrator, where $\Psi_{\tau}$ denotes the higher-order terms and $\mathcal{R}_{\tau}^n$ represents the remainder corresponding to the step number $n$. This implies
\begin{align*}
    &U^{n+1}=e^{\tau L}U^n+\Psi_{\tau}(U^n)+\mathcal R_{\tau}^n,\\
    &U^{n-1}=e^{-\tau L}U^n+\Psi_{-\tau}(U^n)+\mathcal R_{-\tau}^n.
\end{align*}
Equivalently, one has
\begin{align}
    e^{-\tau L}U^{n+1}&=U^n+e^{-\tau L}\Psi_{\tau}(U^n)+e^{-\tau L}\mathcal R_{\tau}^n,\\
    e^{\tau L}U^{n-1}&=U^n+e^{\tau L}\Psi_{-\tau}(U^n)+e^{\tau L}\mathcal R_{-\tau}^n.
\end{align}
Hence we obtain
\begin{equation}
    U^{n+1}=e^{2\tau L}U^{n-1}+[\Psi_{\tau}(U^n)-e^{2\tau L}\Psi_{-\tau}(U^n)]+[\mathcal R_{\tau}^n-e^{2\tau L}\mathcal R_{-\tau}^n].
\end{equation}
This inspires us to define the following two-step scheme:
\begin{equation}\label{general scheme}
    \begin{aligned}
      U^{n+1}&=e^{2\tau L}U^{n-1}+\Psi_{\tau}(U^n)-e^{2\tau L}\Psi_{-\tau}(U^n),\\
      U^1:&=e^{\tau L}U^0+\Psi_{\tau}(U^0).
    \end{aligned}
\end{equation}

It can be seen that the numerical scheme constructed using the above method exhibits symmetry. Let $\tau$ be its oppsite, we have
\begin{equation}\label{oppsite}
    U^{n-1}=e^{-2\tau L}U^{n+1}+\Psi_{-\tau}(U^n)-e^{-2\tau L}\Psi_{\tau}(U^n).
\end{equation} 
After multiplying both sides of (\ref{oppsite}) by $e^{2\tau L}$, it is clear to verify that the scheme (\ref{general scheme}) is symmetric.

If we set the higher-order term (corresponding to the classic Lie splitting method) to be 
\[
\Psi_{\tau}(U^n)=\tau e^{\tau L}F(U^n),
\]
we obtain the first-order symmetric scheme definitiond in (\ref{sLRI1}). As a second application of the above procedure, we shall use the Lie splitting method with low-regularity correction terms in \cite{Li2023} and (\ref{general scheme}) to construct the following second-order symmetric scheme.

The low-regularity correction of Lie splitting definitiond in \cite{Li2023} is
\begin{equation}\label{LBY LR2}
    U^{n+1}=e^{\tau L}U^n+\tau e^{\tau L}F(U^n)+\tau^2e^{\tau L}\varphi_2(-2\tau L)H(U^n),
\end{equation}
where
\begin{equation}
    \varphi_2(A):=A^{-2}(e^{A}-A-I)
\end{equation}
and 
\begin{equation}
    H(U(t_n)):=(-f(u(t_n)),f^{\prime}(u(t_n))\partial_tu(t_n))^T.
\end{equation}
We substitute 
\begin{equation}
    \Psi_{\tau}(U^n)=\tau e^{\tau L}F(U^n)+\tau^2e^{\tau L}\varphi_2(-2\tau L)H(U^n)
\end{equation}
into (\ref{general scheme}), yielding a symmetric scheme (denoted by sLRI2)
\begin{equation}\label{sLRI2}
    \begin{aligned}
        U^{n+1}=&e^{2\tau L}U^{n-1}+2\tau e^{\tau L}F(U_n)+\tau^2e^{\tau L}[\varphi_2(-2\tau L)-\varphi_2(2\tau L)]H(U^n)\\
    =&e^{2\tau L}U^{n-1}+2\tau e^{\tau L}F(U_n)\\
    &\qquad\qquad\ +\underbrace{(2L)^{-1}[2\tau e^{2\tau L}+(2L)^{-1}(e^{-\tau L}-e^{3\tau L})]}_{\tau^2e^{\tau L}\phi(-2\tau L)}H(U^n),\quad n\ge1,\\
        U^1=&e^{\tau L}U^0+\tau e^{\tau L}F(U^0)+\tau^2e^{\tau L}\varphi_2(-2\tau L)H(U^0),
    \end{aligned}
\end{equation}
where
\begin{equation}
    \phi(A)=2A^{-2}(\sinh A-A).
\end{equation}

\section{Convergence results}
In this section, we present the convergence results for the two symmetric low-regularity integrators constructed in Section~\ref{sec:construction}. We place particular emphasis on the first-order scheme (\ref{sLRI1}), for which symmetry yields an improved order of convergence. 

By Stone's theorem and the skew-adjointness of the operator $L$ (cf. \cite{Pazy1983}), $iL$ generates a unitary group on $H_0^1(\mathbb T)\times L^2(\mathbb T)$, equivalently $\{e^{tL}\}_{t\in\br}$ is a $C_0$ group on  $H_0^1(\mathbb T)\times L^2(\mathbb T)$. Then, by the contraction mapping principle and the global Lipschitz property (\ref{Lip}) of $f$, the system (\ref{U_t-LU=F(U)}) has a unique energy solution $U\in L^{\infty}(0,T;H_0^1(\mathbb T)\times L^2(\mathbb T))$ satisfying the the variation-of-constants formula given in (\ref{vcf}).

The energy norm of Klein-Gordon equation (\ref{U_t-LU=F(U)}) is 
\begin{equation}
    |(w_1,w_2)^T|_1=|W|_1:=\left(||\nabla w_1||_{L^2(\Bbb T)}^2+||w_2||_{L^2(\Bbb T)}^2\right)^{\frac12}.
\end{equation}
More generally, we denote
\begin{equation}\label{energynorm}
    |W|_{\alpha}:=\left(||w_1||_{\dot{H}^{\alpha}(\Bbb T)}^2+||w_2||_{H^{\alpha-1}(\Bbb T)}^2\right)^{\frac12},\ \alpha\in[0,2].
\end{equation}
And some non-energy norms are useful for error analysis:
\begin{equation}\label{nonenergynorm}
        ||W||_{\alpha}:=\left(||w_1||_{H^{\alpha}(\Bbb T)}^2+||w_2||_{H^{\alpha-1}(\Bbb T)}^2\right)^{\frac12},\ \alpha\in[0,2].
\end{equation}
When $\alpha<1$, the norms in (\ref{energynorm}) and (\ref{nonenergynorm}) corresponding to the second component are taken to be the Sobolev norm of the dual space of $H^{1-\alpha}(\Bbb T)$.

It is known (see \cite{Kato1967,EngelNagel2000}) that the $C_0$ group $\{e^{tL}\}_{t\in\br}$ is isometric and bounded in the norm $|\cdot|_{\alpha}$ and $||\cdot||_{\alpha}$ respectively. That is, for any $W\in [H^{\alpha}(\Bbb T)\cap H_0^1(\Bbb T)]\times H^{\alpha-1}(\Bbb T)$ and $\alpha\in[1,2]$, we have
\begin{equation}
        |e^{tL}W|_{\alpha}=|W|_{\alpha}\ ,\ 
        ||e^{tL}W||_{\alpha}\lesssim||W||_{\alpha}\ ,\ \forall t\in\br.
\end{equation}
Furthermore, the following estimate for $F(U)$ defined in (\ref{U,U^0,F(U)}) will be used frequently in the sequel: for any $\alpha\in[1,2]$ and $U=(u,v)\in H^{\alpha}(\Bbb T)\times H^{\alpha-1}(\Bbb T)$, we have 
\begin{equation}
        ||F(U)||_{\alpha}=||f(u)||_{H^{\alpha-1}(\Bbb T)}\lesssim ||U||_{\alpha-1}+1.
\end{equation}

We write the multi-steps scheme (\ref{general scheme}) in a matrix form
\begin{equation}
    \mathbf{U}^{n+1}=\bm{\Phi}_\tau(\mathbf{U}^n),
\end{equation}
where $\mathbf{U}^n=(U^n,U^{n-1})^T$, $n=1,2,\cdots$ and 
\begin{equation}\label{matrix form}
    \bm{\Phi}_\tau(\mathbf{U}^n):=\begin{pmatrix}e^{2\tau L}U^{n-1}+\Psi_\tau(U^n)-e^{2\tau L}\Psi_{-\tau}(U^n)\\U^n\end{pmatrix}.
\end{equation}
To estimate the stability of the (\ref{general scheme}), we define a product energy norm on $ [H^{\alpha}(\Bbb T)\times H^{\alpha-1}(\Bbb T)]^2$ as 
\begin{equation}\label{product norm}
    |||\mathbf{V}|||_{\alpha}:=\sqrt{||[\mathbf{V}]_1||_{\alpha}^2+||[\mathbf{V}]_2||_{\alpha}^2},\quad \alpha\in [1,2],
\end{equation}
where $[v]_j$ is the $j$-th component of vector $v$.
In the following, we shall frequently use the fractional Hölder continuity (cf. \cite{Bahouri2011}):
\begin{lemma}\label{exp-I}
    Suppose that $r\in(0,1)$ and $W\in H^{1+r}(\Bbb T)\times H^{r}(\Bbb T)$. Then it holds that
    \begin{equation}
        ||(e^{tL}-I)W||_1\lesssim t^{r}||W||_{1+r}.
    \end{equation}
\end{lemma}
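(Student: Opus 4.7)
The plan is to obtain Lemma \ref{exp-I} by interpolating between the two endpoint estimates
\begin{align*}
\|(e^{tL}-I)W\|_1 &\lesssim \|W\|_1 \qquad (r=0),\\
\|(e^{tL}-I)W\|_1 &\lesssim t\,\|W\|_2 \qquad (r=1),
\end{align*}
which together automatically yield the fractional estimate by a standard Sobolev interpolation argument.

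For the $r=0$ endpoint I would simply invoke the triangle inequality together with the boundedness of $\{e^{tL}\}_{t\in\br}$ in the non-energy norm $\|\cdot\|_1$ recalled just before the lemma. For the $r=1$ endpoint, assuming $W\in H^2(\mathbb T)\times H^1(\mathbb T)$, I would write
\begin{equation*}
e^{tL}W-W=\int_0^t L\,e^{sL}W\,\dd s,
\end{equation*}
and then estimate under the integral. Since $L(w_1,w_2)^{\top}=(w_2,\Delta w_1)^{\top}$, a direct inspection of the non-energy norm gives $\|LV\|_1\le\|V\|_2$ for any $V\in H^2(\mathbb T)\times H^1(\mathbb T)$; combining this with $\|e^{sL}W\|_2\lesssim\|W\|_2$ from the recalled boundedness yields $\|L e^{sL}W\|_1\lesssim\|W\|_2$, and hence $\|(e^{tL}-I)W\|_1\lesssim t\|W\|_2$.

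To conclude, I regard $T_t:=e^{tL}-I$ as a bounded linear operator from $H^1(\mathbb T)\times L^2(\mathbb T)$ to itself (with norm $\lesssim 1$) and from $H^2(\mathbb T)\times H^1(\mathbb T)$ to $H^1(\mathbb T)\times L^2(\mathbb T)$ (with norm $\lesssim t$). Since $H^{1+r}(\mathbb T)\times H^r(\mathbb T)$ is the interpolation space of exponent $r$ between these two (for instance by complex interpolation, or by the real method, using that periodic Sobolev spaces coincide with the Besov scale $B^{s}_{2,2}$), the interpolation theorem immediately gives
\begin{equation*}
\|T_t\|_{H^{1+r}(\mathbb T)\times H^r(\mathbb T)\to H^1(\mathbb T)\times L^2(\mathbb T)}\lesssim 1^{1-r}t^{r}=t^{r},
\end{equation*}
which is exactly the claim.

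I do not foresee a genuine obstacle: every step is a standard piece of semigroup theory or Sobolev interpolation, and the only mild subtlety is the verification that $L$ is bounded from $H^2\times H^1$ to $H^1\times L^2$, which is immediate from its matrix form. An alternative route that avoids appealing to abstract interpolation is a direct Fourier-series computation: writing $e^{tL}-I$ in each Fourier mode $k$ as a $2\times 2$ matrix with entries built from $\cos(t|k|)-1$, $\sin(t|k|)/|k|$ and $-|k|\sin(t|k|)$, the elementary inequalities $|1-\cos\theta|\lesssim|\theta|^{r}$ and $|\sin\theta|\lesssim|\theta|^{r}$ (both valid for all $\theta\in\br$ and $r\in(0,1)$), applied with $\theta=t|k|$, produce a factor $t^{r}|k|^{r}$ that matches precisely the regularity gap between $\|\cdot\|_{1+r}$ and $\|\cdot\|_{1}$.
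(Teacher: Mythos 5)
Your proposal is correct. Note that the paper does not actually prove Lemma \ref{exp-I}; it is invoked with a citation to the literature (``fractional H\"older continuity''), so there is no in-paper argument to compare against, and your write-up supplies a legitimate self-contained proof. Both of your routes check out: the two endpoint bounds are right (the $r=0$ case is the triangle inequality plus boundedness of the group in $\|\cdot\|_1$; the $r=1$ case follows from $e^{tL}W-W=\int_0^t Le^{sL}W\,\dd s$ together with the componentwise observation $\|LV\|_1\le\|V\|_2$), and since $H^{1+r}(\mathbb T)\times H^r(\mathbb T)$ is the exponent-$r$ interpolation space between $H^1\times L^2$ and $H^2\times H^1$ (interpolation of products is componentwise), the operator bound $\lesssim 1^{1-r}t^r$ follows. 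The direct Fourier verification is equally valid via $\min(1,|\theta|)\le|\theta|^r$ applied to $\theta=t|k|$, and it makes transparent where the factor $t^r|k|^r$ bridging $\|\cdot\|_{1+r}$ and $\|\cdot\|_1$ comes from. The only caveat worth recording is that the implied constant cannot be uniform in $t\in\br$ in the inhomogeneous norms: the zero Fourier mode of $e^{tL}$ acts as $\bigl(\begin{smallmatrix}1&t\\0&1\end{smallmatrix}\bigr)$ (equivalently, $\{e^{tL}\}$ is only locally bounded in $\|\cdot\|_\alpha$, not isometric as it is in $|\cdot|_\alpha$), so the estimate should be read for $t$ in a bounded interval; this is harmless here since the lemma is only ever applied with $|t|\le\tau$.
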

And the bilinear estimates (cf. \cite{KatoPonce1988}) is also useful in the sequel:
\begin{lemma}[Kato-Ponce]
    For $\alpha>\frac d2$ and any $f,g\in H^{\alpha}(\Bbb T)$, we have
    \begin{equation}
        ||fg||_{H^{\alpha}(\Bbb T)}\lesssim ||f||_{H^{\alpha}(\Bbb T)}||g||_{H^{\alpha}(\Bbb T)},
    \end{equation}
    and
    \begin{equation}
        ||fg||_{H^{\beta}(\Bbb T)}\lesssim ||f||_{H^{\alpha}(\Bbb T)}||g||_{H^{\beta}(\Bbb T)},\quad \forall\beta\in[0,\alpha].
    \end{equation}
\end{lemma}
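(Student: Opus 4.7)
The plan is to work entirely with Fourier series on $\mathbb T^d$ and convert the product $fg$ into a convolution of Fourier coefficients; a Peetre-type inequality then reduces both bounds to Young's convolution inequality. This route is cleaner than invoking the full Littlewood--Paley/paraproduct machinery since, on the torus, Fourier series are directly available. To set things up, I would write $\widehat{fg}(k) = (\hat f * \hat g)(k)$, so that $\|fg\|_{H^s}^2 = \sum_{k}\langle k\rangle^{2s}\,|(\hat f*\hat g)(k)|^2$ for any $s\ge 0$. The single hypothesis $\alpha>d/2$ enters through the fact that $\{\langle k\rangle^{-\alpha}\}_{k\in\mathbb Z^d} \in \ell^2$, which by Cauchy--Schwarz gives the quantitative embedding $\|\hat f\|_{\ell^1} \lesssim \|f\|_{H^\alpha}$ (and similarly for $g$). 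Combined with the Peetre inequality $\langle k\rangle^s \lesssim \langle k-j\rangle^s + \langle j\rangle^s$, valid for every $s\ge 0$, this is the only input needed.

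For the algebra bound, I would apply Peetre with exponent $\alpha$ inside the convolution sum, splitting $\langle k\rangle^\alpha|(\hat f*\hat g)(k)|$ into two contributions of the form $((\langle\cdot\rangle^\alpha|\hat f|)*|\hat g|)(k)$ and $(|\hat f|*(\langle\cdot\rangle^\alpha|\hat g|))(k)$. Taking $\ell^2$ norms in $k$ and applying Young's inequality $\ell^2 * \ell^1 \hookrightarrow \ell^2$ to each piece yields a bound of the form $\|f\|_{H^\alpha}\|\hat g\|_{\ell^1}+\|\hat f\|_{\ell^1}\|g\|_{H^\alpha}$, which is $\lesssim \|f\|_{H^\alpha}\|g\|_{H^\alpha}$ by the quantitative embedding above. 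For the second inequality I would repeat the identical argument with $\beta$ in Peetre's inequality: the resulting upper bound becomes $\|f\|_{H^\beta}\|\hat g\|_{\ell^1}+\|\hat f\|_{\ell^1}\|g\|_{H^\beta}$, which, after using $\|\hat f\|_{\ell^1}\lesssim\|f\|_{H^\alpha}$ and the embedding $\|f\|_{H^\beta}\le\|f\|_{H^\alpha}$ (which holds because $\beta\le\alpha$), is controlled by $\|f\|_{H^\alpha}\|g\|_{H^\beta}$. An equally clean finish is to interpolate the algebra estimate already proven with the trivial bound $\|fg\|_{L^2}\le\|f\|_{L^\infty}\|g\|_{L^2}\lesssim\|f\|_{H^\alpha}\|g\|_{L^2}$, obtaining the full scale $\beta\in[0,\alpha]$ in one stroke.

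The only real thing to watch is the bookkeeping in the Peetre/Young step: one must arrange matters so that the $\ell^1$ norm of Fourier coefficients always lands on the factor that is going to be controlled in $H^\alpha$, while the weighted $\ell^2$ norm stays on the factor whose Sobolev norm appears on the right-hand side of the claim. A symmetric split actually produces the stronger Kato--Ponce estimate $\|fg\|_{H^\beta}\lesssim\|f\|_{H^\alpha}\|g\|_{H^\beta}+\|f\|_{H^\beta}\|g\|_{H^\alpha}$, and the asymmetric form stated in the lemma is then immediate from $\|f\|_{H^\beta}\le\|f\|_{H^\alpha}$. There is no genuine analytical obstacle here; the argument is essentially a two-line Fourier computation once $\alpha>d/2$ is used to pass to $\ell^1$.
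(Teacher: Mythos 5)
First, note that the paper does not prove this lemma at all: it is quoted as a known bilinear estimate with a citation to Kato--Ponce, so there is no in-paper argument to compare against. Your Fourier-side proof of the first (algebra) inequality is correct and standard: Peetre with exponent $\alpha$, Young's $\ell^2*\ell^1\hookrightarrow\ell^2$, and the Cauchy--Schwarz embedding $\|\hat f\|_{\ell^1}\lesssim\|f\|_{H^\alpha}$ for $\alpha>d/2$ close it in a few lines.

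The second (tame) inequality is where your primary route has a genuine gap. Applying Peetre with exponent $\beta$ and Young exactly as before produces the two terms $\|f\|_{H^\beta}\,\|\hat g\|_{\ell^1}+\|\hat f\|_{\ell^1}\,\|g\|_{H^\beta}$. The second term is fine, but in the first term $\|\hat g\|_{\ell^1}$ is \emph{not} controlled by $\|g\|_{H^\beta}$ when $\beta\le d/2$ (e.g.\ $\beta=0$), and your stated finishing step only converts $\|f\|_{H^\beta}$ into $\|f\|_{H^\alpha}$ while leaving $\|\hat g\|_{\ell^1}$ untouched. The same issue defeats the claimed reduction from the symmetric bound $\|f\|_{H^\alpha}\|g\|_{H^\beta}+\|f\|_{H^\beta}\|g\|_{H^\alpha}$ to the asymmetric one: the offending term carries $\|g\|_{H^\alpha}$, which cannot be replaced by $\|g\|_{H^\beta}$. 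The standard repair in the Fourier argument is to split the convolution according to which factor carries the dominant frequency: on the region $|k-j|\ge|j|$ one writes $\langle k\rangle^\beta\lesssim\langle k-j\rangle^\beta\le\langle k-j\rangle^\alpha\langle j\rangle^{\beta-\alpha}$ (using $\beta\le\alpha$), so that Cauchy--Schwarz with $\alpha>d/2$ gives $\sum_j\langle j\rangle^{\beta-\alpha}|\hat g(j)|\lesssim\|g\|_{H^\beta}$ and the weighted $\ell^2$ norm lands on $f$. Alternatively, the interpolation finish you mention in passing is valid and does close the argument: for fixed $f\in H^\alpha$ the multiplication operator $g\mapsto fg$ is bounded on $L^2$ (via $H^\alpha\hookrightarrow L^\infty$) and on $H^\alpha$ (by the algebra estimate), each with norm $\lesssim\|f\|_{H^\alpha}$, hence bounded on $H^{\theta\alpha}$ for all $\theta\in[0,1]$. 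So the proposal is salvageable through its own fallback, but the main computation as written would not yield the asymmetric estimate.
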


\subsection{Convergence analysis for sLRI1}\label{sec:convergence sLRI1}
We first investigate the stability of (\ref{sLRI1}), which will be useful for estimating the global error of sLRI1.
\begin{proposition}
    Suppose the correction term for (\ref{matrix form}) to be
    \[
    \Psi_{\tau}(U^n)=\tau e^{\tau L}F(U^n),
    \]
    and $\mathbf{U}=(U_1,U_2)^T, \mathbf{V}=(V_1,V_2)^T$ are both in $[H^{s}(\Bbb T)\times H^{s-1}(\Bbb T)]^2$ for $d=1,2,3$ and $\max(1,d/2)< s\le2$. Then there exists $\tau_0>0$ sufficiently
    small and a constant $M$ independent to $\tau$ such that for $0<\tau\le\tau_0$,
\begin{equation}\label{stable sLRI1}
        |||\bm{\Phi}_\tau(\mathbf{U})-\bm{\Phi}_\tau(\mathbf{V})|||_s\le (1+\tau M(||U_1||_s,||V_1||_s)) |||\mathbf{U}-\mathbf{V}|||_s.
    \end{equation}
\end{proposition}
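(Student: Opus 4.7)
The strategy is to unfold $\bm{\Phi}_\tau(\mathbf U)-\bm{\Phi}_\tau(\mathbf V)$ into an explicit expression, prove a locally Lipschitz estimate for $F$ in the non-energy norm $\|\cdot\|_s$, and then close the argument with elementary algebra. For the sLRI1 correction $\Psi_\tau(U)=\tau e^{\tau L}F(U)$ one has $\Psi_{-\tau}(U)=-\tau e^{-\tau L}F(U)$, so
\[
\Psi_\tau(U)-e^{2\tau L}\Psi_{-\tau}(U)=\tau e^{\tau L}F(U)-\bigl(-\tau e^{\tau L}F(U)\bigr)=2\tau e^{\tau L}F(U).
\]
Writing $\mathbf U=(U_1,U_2)$ and $\mathbf V=(V_1,V_2)$, the first block of the difference thus equals $e^{2\tau L}(U_2-V_2)+2\tau e^{\tau L}(F(U_1)-F(V_1))$, while the second block is $U_1-V_1$.

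The heart of the argument is the Lipschitz bound $\|F(U_1)-F(V_1)\|_s\le M(\|U_1\|_s,\|V_1\|_s)\,\|U_1-V_1\|_s$. Since only the second scalar slot of $F$ is nontrivial, this reduces to estimating $\|f(u_1)-f(\tilde u_1)\|_{H^{s-1}}$, where $u_1,\tilde u_1$ are the scalar first components of $U_1,V_1$. I would use the mean-value identity $f(u_1)-f(\tilde u_1)=(u_1-\tilde u_1)\,g$ with $g=\int_0^1 f'(\theta u_1+(1-\theta)\tilde u_1)\,d\theta$, then invoke the Kato-Ponce lemma with $\alpha=s>d/2$ and $\beta=s-1\in[0,s]$ (placing the less regular factor $g$ on the $H^\beta$ side) to get
\[
\|(u_1-\tilde u_1)g\|_{H^{s-1}}\lesssim\|u_1-\tilde u_1\|_{H^s}\,\|g\|_{H^{s-1}},
\]
and finally bound $\|g\|_{H^{s-1}}$ via the (fractional) chain rule: for $s-1\in(0,1]$, $\|f'(w_\theta)\|_{\dot H^{s-1}}\lesssim\|f''\|_\infty\|w_\theta\|_{\dot H^{s-1}}$, together with $\|f'(w_\theta)\|_{L^2}\le\|f'\|_\infty$. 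This yields $M=C(1+\|U_1\|_s+\|V_1\|_s)$, with $C$ depending only on the $L^\infty$ bounds of $f',f''$.

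With the Lipschitz bound in hand, the uniform boundedness $\|e^{tL}W\|_s\lesssim\|W\|_s$ (whose constant may be absorbed into $M$ through the equivalence of $\|\cdot\|_s$ with the homogeneous, exactly isometric norm $|\cdot|_s$ on the relevant zero-mean subspace) combined with the triangle inequality produces
\[
\|[\bm{\Phi}_\tau(\mathbf U)-\bm{\Phi}_\tau(\mathbf V)]_1\|_s\le \|U_2-V_2\|_s+2\tau M\|U_1-V_1\|_s.
\]
Squaring, bounding the cross term via $2ab\le\tau a^2+\tau^{-1}b^2$, and adding $\|U_1-V_1\|_s^2$ from the second block yields
\[
|||\bm{\Phi}_\tau(\mathbf U)-\bm{\Phi}_\tau(\mathbf V)|||_s^2\le(1+C\tau M+C\tau^2 M^2)|||\mathbf U-\mathbf V|||_s^2,
\]
which is of the required form $(1+\tau M')^2|||\mathbf U-\mathbf V|||_s^2$ for $\tau\le\tau_0$ small and $M'$ sufficiently enlarged. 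The main technical obstacle is the composition estimate for $g$ at fractional, minimal regularity: because only $f\in C^2$ is assumed, the classical chain rule is unavailable for non-integer $s$, so one must appeal to the fractional chain rule and use $s-1\in(0,1]$ crucially; the restriction $s>\max(1,d/2)$ is precisely what allows both the Sobolev embedding $H^s\hookrightarrow L^\infty$ and the Kato-Ponce inequality at $\alpha=s$ to be applied simultaneously.
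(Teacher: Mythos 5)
Your proof is correct and follows essentially the same route as the paper's: unfold the two-step map, reduce the claim to a Lipschitz bound for $F$ in the $\|\cdot\|_s$ norm via the mean-value factorization $f(u_1)-f(v_1)=\mu\cdot(u_1-v_1)$, invoke the Kato--Ponce bilinear estimate, and assemble. The one substantive difference is the placement of the two factors in Kato--Ponce: the paper puts the composition $\mu$ in $H^{s}$ and the difference $u_1-v_1$ in $H^{s-1}$, while you put the difference in $H^{s}$ and $\mu$ in $H^{s-1}$. Your choice is actually the more economical one, since bounding $\|\mu\|_{H^{s-1}}$ with $s-1\in(0,1]$ needs only the fractional chain rule at order at most one, hence only $\sup|f''|$ --- exactly what the standing assumption $f\in C^2$ with bounded $f',f''$ provides --- whereas the paper's Moser-type bound $\|\mu\|_{H^{s}}\lesssim 1+\|\cdot\|_{H^{s}}$ at order $s$ up to $2$ would formally call on $f'''$ near the endpoint. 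Your final assembly by squaring and Young's inequality is also a cleaner way to reach the $(1+\tau M)$ form than the paper's direct sum of component norms. One caution: a fixed constant in $\|e^{2\tau L}W\|_s\lesssim\|W\|_s$ could not be ``absorbed into $M$'' by norm equivalence alone, since a constant $C>1$ independent of $\tau$ would destroy the $1+O(\tau)$ structure; the statement you actually need, and which does hold, is $\|e^{2\tau L}W\|_s\le(1+2\tau)\|W\|_s$, because on the torus $e^{tL}$ acts as an exact isometry on every nonzero Fourier mode even in the inhomogeneous norm, and only the zero mode, where $e^{tL}=I+tN$ with $N$ nilpotent, contributes a factor $1+|t|$, which is $O(\tau)$ and hence absorbable.
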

\begin{proof}
    Let $[U_1]_1=u_1,[V_1]_1=v_1$. By Theorem 4.5 in \cite[p. 222]{Pazy1983}, we have
    \begin{align*}
         &|||\bm{\Phi}_\tau(\mathbf{U})-\bm{\Phi}_\tau(\mathbf{V})|||_{s}\\
         \le& ||e^{2\tau L}(U_2-V_2)||_s+||U_1-V_1||_s\\
         &+||\Psi_{\tau}(U_1)-\Psi_{\tau}(V_1)||_s+||\Psi_{-\tau}(U_1)-\Psi_{-\tau}(V_1)||_s\\
         \le &(1+2\tau)|||\mathbf{U}-\mathbf{V}|||_s+2\tau||F(U_1)-F(V_1)||_s.\\
    \end{align*} 
    Letting $\mu(u,v):=\int_0^1f'(v+\tau (u-v))\dd\tau$, since $s>d/2$, the product estimate follows
\begin{align}\label{F(U)-F(V)}
        ||F(U_1)-F(V_1)||_s=&||f(u_1)-f(v_1)||_{H^{s-1}(\mathbb T )}\nonumber\\
        = &||\mu(u_1,v_1)(u_1-v_1)||_{H^{s-1}(\mathbb T )}\nonumber\\
        \le & C(s,d)||\mu(u_1,v_1)||_{H^{s}(\mathbb T )}||u_1-v_1||_{H^{s-1}(\mathbb T )}\nonumber\\
        \le&C(L,s,d)(1+||v_1+\tau(u_1-v_1)||_{H^{s}(\mathbb T )})||u_1-v_1||_{H^{s-1}(\mathbb T )}\nonumber\\
        \le &C(L,s,d)(1+||u_1||_{H^{s-1}(\mathbb T )}+||v_1||_{H^{s-1}(\mathbb T )})|||\mathbf{U}-\mathbf{V}|||_s\nonumber\\
        :=&M(||u_1||_{H^{s}(\mathbb T )},||v_1||_{H^{s}(\mathbb T )})|||\mathbf{U}-\mathbf{V}|||_s.
\end{align}
Based on the above analysis, the result \eqref{stable sLRI1} is obtained immediately. 
\end{proof}
\begin{remark}\label{stable proof}
From the above proof, it is clear that, regardless of the choice of the higher-order term $\Psi_{\tau}$, one always has
\begin{equation}
    |||\bm{\Phi}_\tau(\mathbf{U})-\bm{\Phi}_\tau(\mathbf{V})|||_{s}\le ||e^{2\tau L}||\cdot |||\mathbf{U}-\mathbf{V}|||_s+2||\Psi_{\tau}(U_1)-\Psi_{\tau}(V_1)||_s.
\end{equation}
In other words, as long as the following conditions are satisfied:
(1). the operator norm $||e^{2\tau L}||$ admits an upper bound of the form $1+C\tau$;
(2). the given scheme $U^{n+1}=e^{\tau L}U^n+\Psi_{\tau}(U^n)$ is stable;
then the symmetric scheme defined by (\ref{matrix form}) is stable with respect to the norm (\ref{product norm}).
\end{remark}
Next we have the following improved global convergence of (\ref{sLRI1}).
\begin{theorem}\label{converge sLRI1}
    Letting $0\le r<1$ and $(u^0,v^0)\in H^{1+r}(\Bbb T)\times H^{r}(\Bbb T)$, the numerical solution $U^n$ given by (\ref{sLRI1}) satisfies the error bound
    \begin{equation}
        ||U(t_n)-U^n||_1\le C_1(T,r)\tau^{1+r},\quad 0\le n\le T/\tau,
    \end{equation}
    where the time step size $\tau$ is small sufficiently, and $C_1$ is some positive constant independent of the stepsize $\tau$ but may depend on $T$ and $r$.
\end{theorem}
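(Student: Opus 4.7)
The plan is a standard Lady Windermere's fan argument: bound the local truncation error of the two-step recursion (\ref{sLRI1}), apply the stability estimate of Proposition~3.1 to propagate errors, and close with a discrete Gronwall inequality. The key new ingredient, compared with the non-symmetric analysis, is that the time-reversal symmetry produces a local error of order $\tau^{2+r}$ rather than $\tau^{1+r}$, so that summation over $T/\tau$ steps leaves a global error of order $\tau^{1+r}$; the low-regularity fractional H\"older estimate of Lemma~\ref{exp-I} then supplies the remaining factor $\tau^{r}$.

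For $n \ge 1$, I would subtract (\ref{sLRI1}) from the exact identity (\ref{main}) to obtain the local defect
\[
\mathcal R_\tau(t_n) = \int_{-\tau}^{\tau} e^{(\tau - s)L} F(U(t_n + s))\dd s - 2\tau\, e^{\tau L} F(U(t_n)).
\]
Writing $h(s) = e^{(\tau - s)L} F(U(t_n + s))$, so that $h(0) = e^{\tau L} F(U(t_n))$, this is exactly a symmetric midpoint-rule error $\int_{-\tau}^{\tau} [h(s) - h(0)]\dd s$. Rewriting $h(s) - h(0) = \int_0^s h'(\sigma)\dd\sigma$ and swapping the order of integration yields the key representation
\[
\mathcal R_\tau(t_n) = \int_{0}^{\tau} (\tau - \sigma)\bigl[h'(\sigma) - h'(-\sigma)\bigr]\dd\sigma.
\]
A short computation using $\partial_t U = L U + F(U)$ and the definition of $H$ in (\ref{LBY LR2}) gives $h'(s) = e^{(\tau - s)L} H(U(t_n + s))$.

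The main obstacle is to control $\|h'(\sigma) - h'(-\sigma)\|_1$ under the assumed low regularity. I would split
\[
h'(\sigma) - h'(-\sigma) = e^{(\tau - \sigma)L}\bigl[H(U(t_n + \sigma)) - H(U(t_n - \sigma))\bigr] + e^{\tau L}\bigl(e^{-\sigma L} - e^{\sigma L}\bigr) H(U(t_n - \sigma)).
\]
The second summand is handled directly by Lemma~\ref{exp-I}, giving $O(\sigma^{r}\|H(U)\|_{1+r})$. For the first summand, a mean-value expansion of the components of $H$ combined with the Kato-Ponce product estimate reduces the bound to $\|U(t_n + \sigma) - U(t_n - \sigma)\|_1$, which I would in turn control by $O(\sigma^{r})$ by applying the variation-of-constants formula (\ref{vcf}) on both sides of $t_n$ and using Lemma~\ref{exp-I} once more. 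Integrating in $\sigma$ against $(\tau - \sigma)$ over $[0, \tau]$ then produces the improved local error $\|\mathcal R_\tau(t_n)\|_1 \lesssim \tau^{2+r}$.

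For the starting step, $\|U(t_1) - U^1\|_1$ is the local error of the Lie splitting, which under the regularity $H^{1+r} \times H^{r}$ is already $O(\tau^{1+r})$ via a single application of Lemma~\ref{exp-I} to $\int_{0}^{\tau} e^{(\tau - s)L}[F(U(t_0 + s)) - F(U(t_0))]\dd s$. Setting $\mathbf U^n = (U^n, U^{n-1})^{T}$ and $\mathbf U(t_n) = (U(t_n), U(t_{n-1}))^{T}$, Proposition~3.1 applied with $s = 1 + r$ yields
\[
|||\mathbf U(t_{n+1}) - \mathbf U^{n+1}|||_1 \le (1 + C\tau)\, |||\mathbf U(t_n) - \mathbf U^n|||_1 + \|\mathcal R_\tau(t_n)\|_1,
\]
and a discrete Gronwall inequality closes the recursion to give $\|U(t_n) - U^n\|_1 \le C_1(T, r)\,\tau^{1+r}$. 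A uniform a priori bound on $\|U^n\|_{1+r}$, needed so that the stability estimate can be invoked at every step, follows inductively from the same recursion by standard arguments.
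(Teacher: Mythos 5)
Your proposal is correct and follows the same overall strategy as the paper (symmetric local error of order $\tau^{2+r}$, then stability plus discrete Gronwall), but your treatment of the defect takes a genuinely different and arguably cleaner route. The paper splits $\mathcal R_\tau(t_n)$ into $R_1$ (comparing $F(U(t_n+s))$ with $F(e^{sL}U(t_n))$) and $R_2$ (comparing $F(e^{sL}U(t_n))$ with $F(U(t_n))$), and analyses each via the auxiliary twisted variables $V_n(t)=e^{-tL}U_n(t)$ and $W_n(t)=e^{-tL}F(e^{tL}V_n(t))$, pairing $s$ with $-s$ inside double integrals. You instead keep the integrand $h(s)=e^{(\tau-s)L}F(U(t_n+s))$ whole and exploit the exact identity $h'(s)=e^{(\tau-s)L}H(U(t_n+s))$, which follows from the cancellation $-LF(U)+DF(U)\partial_tU=H(U)$ (the same structure that underlies the $\varphi_2$-correction in (\ref{LBY LR2})). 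This collapses the whole local error to $\int_0^\tau(\tau-\sigma)[h'(\sigma)-h'(-\sigma)]\dd\sigma$ and to two estimates, $\|(e^{-\sigma L}-e^{\sigma L})H(U)\|_1\lesssim\sigma^r\|H(U)\|_{1+r}$ and $\|H(U(t_n+\sigma))-H(U(t_n-\sigma))\|_1\lesssim\sigma^r$, both resting on the same tools the paper uses (Lemma~\ref{exp-I} and Kato--Ponce), while avoiding the bookkeeping with $V_n,W_n$ and making the origin of the extra factor $\tau^r$ more transparent. You also handle the starting step explicitly, which the paper glosses over. Two small points to repair in your write-up: (i) the error recursion runs in the $|||\cdot|||_1$ norm, so the stability estimate must be invoked at $s=1$ with a Lipschitz constant depending on $\|U^n\|_1$ (bounded inductively), not at $s=1+r$ as you state --- $s=1$ sits on the boundary of the hypothesis of the stability proposition, but its proof goes through for $d=1$ via $H^1(\Bbb T)\hookrightarrow L^\infty(\Bbb T)$, exactly as the paper itself implicitly does in (\ref{global err}); (ii) when bounding $\|f'(u)\partial_tu\|_{H^r(\Bbb T)}$ for $r<1/2$ you must use the asymmetric bilinear estimate $\|fg\|_{H^{r}(\Bbb T)}\lesssim\|f\|_{H^{1+r}(\Bbb T)}\|g\|_{H^{r}(\Bbb T)}$, since $H^r$ is not an algebra in that range; the second Kato--Ponce inequality quoted in the paper supplies exactly this.
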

\begin{proof}
Set
\begin{equation}\label{err R}
    \begin{aligned}
        \mathcal R_{\tau}(t_n):&=U(t_{n+1})-e^{2\tau L}U(t_{n-1})-2\tau e^{\tau L}F(U(t_n))\\
        &=e^{\tau L}(R_1(t_n)+R_2(t_n))
    \end{aligned}
\end{equation}
with
\begin{align}
    R_1(t_n):=&\int_{-\tau}^{\tau}e^{-sL}[F(U(t_n+s))-F(e^{sL}U(t_n))]\dd s,\\
    R_2(t_n):=&\int_{-\tau}^{\tau}e^{-sL}[F(e^{sL}U(t_n))-F(U(t_n))]\dd s.
\end{align}

To estimate $R_1(t_n)$, we define the integrand by
\begin{equation}
    X(\sigma,s):=F(e^{(s-\sigma)L}U(t_n+\sigma)),\quad 0\le|\sigma|\le|s|\le\tau
\end{equation}
and
\begin{equation}
    \partial_{\sigma}X(\sigma,s):=X_1(\sigma,s).
\end{equation}
Then we have
\begin{align}\label{R_1}
    R_1(t_n)=&\int_{-\tau}^{\tau}e^{-sL}(X(s,s)-X(0,s))\dd s\nonumber\\
    =&\int_{-\tau}^{\tau}e^{-sL}\int_{0}^{s}\partial_{\sigma}X(\sigma,s)\dd\sigma\dd s\nonumber\\
    =&\int_{0}^{\tau}e^{-sL}\int_{0}^{s}X_1(\sigma,s)-\int_{-\tau}^0e^{-sL}\int_s^0X_1(-\sigma,-s)\dd\sigma\dd s\nonumber\\
    =&\int_0^{\tau}\int_0^se^{-sL}X_1(\sigma,s)-e^{sL}X_1(-\sigma,-s)\dd\sigma\dd s.
\end{align}

For simplicity of notation, we define the following conventions.
\begin{equation}\label{U,V,W}
    \begin{aligned}
        U(t_n+t):=&U_n(t)=(u_n(t),v_n(t))^T,\\
    e^{(s-\sigma)L}U_n(\sigma):=&(\widetilde{u_n}(s;\sigma),\widetilde{v_n}(s;\sigma))^T,\\
    V_n(t):=&e^{-tL}U_n(t),\\ W_n(t):=&e^{-t L}F(e^{t L}V_n(t)).
    \end{aligned}
\end{equation}
By the definition of $V_n$, combining (\ref{vcf}) and (\ref{V_n(t)-V_n(-t)}), we get
\begin{equation}\label{V_n}
    ||V_n(t)-V_n(-t)||_1\lesssim\int_{-t}^{t}||F(U_n(s))||_1\dd s\lesssim t\left(\max_{s\in[-t,t]}||U_n(s)||_0\right).
\end{equation}
Now we determine the explicit expression for $X_1$:
\begin{equation}
    X_1(\sigma,s)=DF\bigl(e^{(s-\sigma)L}U_n(\sigma)\bigr)
    \cdot\partial_{\sigma}\!\left(e^{(s-\sigma)L}U_n(\sigma)\right),
\end{equation}
where \(DF(x)\) denotes the Jacobian of \(F\) at $x$. Using the semigroup property
\[
-\partial_{\sigma}\bigl(e^{(s-\sigma)L}\bigr)
   =Le^{(s-\sigma)L}=e^{(s-\sigma)L}L,
\]
we have
\begin{align}\label{X_1}
        X_1(\sigma,s)= &DF\bigl(e^{(s-\sigma)L}U_n(\sigma)\bigr)\left[-Le^{(s-\sigma)L}U_n(\sigma) + e^{(s-\sigma)L} U'_n(\sigma)\right]\nonumber\\
        =& DF\bigl(e^{(s-\sigma)L}U_n(\sigma)\bigr)
        \cdot e^{(s-\sigma)L}\!\left(U'_n(\sigma)-L U_n(\sigma)\right)\nonumber\\
        =&DF\bigl(e^{sL}V_n(\sigma)\bigr)\cdot e^{(s-\sigma)L}F(U_n(\sigma))\nonumber\\
        =&\begin{pmatrix}0\\f^{\prime}(\widetilde{u_n}(s;\sigma))\cdot [e^{sL}W_n(\sigma)]_1\end{pmatrix}.
\end{align}

Therefore, we can rewrite the integrand as
\begin{equation}
    \begin{aligned}
        &e^{-sL}X_1(\sigma,s)-e^{sL}X_1(-\sigma,-s)\\
        =&\underbrace{(e^{-sL}-e^{sL})X_1(\sigma,s)}_{e_{1}(\sigma,s)}+X_1(\sigma,s)-X_1(-\sigma,-s)\\
        =&e_{1}(\sigma,s)+\underbrace{DF(e^{sL}V_n(\sigma))-DF(e^{-sL}V_n(-\sigma)))e^{sL}W_n(\sigma)}_{e_2(\sigma,s)}\\
        &\qquad\quad\,+\underbrace{DF(e^{-sL}V_n(-\sigma))(e^{sL}W_n(\sigma)-e^{-sL}W_n(-\sigma))}_{e_3(\sigma,s)}\\
        :=&e_{1}(\sigma,s)+e_{2}(\sigma,s)+e_{3}(\sigma,s).
    \end{aligned}
\end{equation}

According to the (\ref{exp-I}), (\ref{U,V,W}) and (\ref{X_1}), we obtain
\begin{align}\label{e_1}
       ||e_1(\sigma,s)||_1=&||(e^{-sL}-e^{sL})X_1(\sigma,s)||_1\nonumber\\
       \lesssim & s^r||f^{\prime}(\widetilde{u_n}(s;\sigma))\cdot [e^{sL}W_n(\sigma)]_1||_{H^{r}(\mathbb T )}\nonumber\\
       \lesssim &\tau^r||f^{\prime}(\widetilde{u_n}(s;\sigma))||_{H^{1+r}(\mathbb T )}||e^{sL}W_n(\sigma)||_{1+r}\nonumber\\
       \lesssim &\tau^r(\|f^{\prime}\|_{L^{\infty}(\mathbb T )}+\|f^{\prime\prime}\|_{L^{\infty}(\mathbb T )}\|\widetilde{u_n}(s;\sigma)\|_{H^{1+r}(\mathbb T )})||F(U_n(\sigma))||_{1+r}\nonumber\\
       \lesssim &\tau^r(1+||U_n(\sigma)||_{1})(1+||u_n(\sigma)||_{H^r(\mathbb T )})\nonumber\\
       \lesssim &\tau^r\left(1+\max_{t\in[-\tau,\tau]}||U_n(t)||_{1}+\max_{t\in[-\tau,\tau]}||U_n(t)||_{0}\right).
\end{align}

By the embedding $H^1\hookrightarrow L^4$, we have
\begin{align}\label{e_2}
    ||e_2(\sigma,s)||_1=&||(DF(e^{sL}V_n(\sigma))-DF(e^{-sL}V_n(-\sigma)))e^{sL}W_n(\sigma)||_1\nonumber\\
    = &||[f^{\prime}(\widetilde{u_n}(s;\sigma))-f^{\prime}(\widetilde{u_n}(-s;-\sigma))]\cdot[e^{sL}W_n(\sigma)]_1||_{L^2(\mathbb T )}\nonumber\\
    \le &||f^{\prime}(\widetilde{u_n}(s;\sigma))-f^{\prime}(\widetilde{u_n}(-s;-\sigma))||_{L^4(\mathbb T )}||[e^{sL}W_n(\sigma)]_1||_{L^4(\mathbb T )}\nonumber\\
    \lesssim & ||\widetilde{u_n}(s;\sigma)-\widetilde{u_n}(-s;-\sigma)||_{H^1(\mathbb T )}||e^{sL}W_n(\sigma)||_1\nonumber\\
    \le & ||e^{sL}V_n(\sigma)-e^{-sL}V_n(-\sigma)||_1\cdot||F(U_n(\sigma))||_1\nonumber\\
    \lesssim &(s^r||V_n(-\sigma)||_{1+r}+||V_n(\sigma)-V_n(-\sigma)||_1)(|f(0)|+||u_n(\sigma)||_{L^2(\mathbb T )})\nonumber\\
    \lesssim &\tau^r\left(\max_{s\in[-\tau,\tau]}||U_n(\sigma)||_{1+r}+\max_{s\in[-\tau,\tau]}||U_n(\sigma)||_0\right)\left(1+\max_{t\in[-\tau,\tau]}||U_n(t)||_{0}\right).
\end{align}
The computation in (\ref{V_n}) and (\ref{X_1}) yields
\begin{align}\label{e_3}
    ||e_3(\sigma,s)||_1=&||DF(e^{-sL}V_n(-\sigma))(e^{sL}W_n(\sigma)-e^{-sL}W_n(-\sigma))||_1\nonumber\\
    \lesssim &||e^{sL}W_n(\sigma)-e^{-sL}W_n(-\sigma)||_1\nonumber\\
    \le &||(e^{sL}-e^{-sL})W_n(-\sigma)||_1+||W_n(\sigma)-W_n(-\sigma)||_1\nonumber\\
    \lesssim &s^r||W_n(-\sigma)||_{1+r}+||(e^{\sigma L}-e^{-\sigma L})V_n(-\sigma)||_1+||V_n(\sigma)-V_n(-\sigma)||_1\nonumber\\
    \lesssim &\tau^r||F(U_n(-\sigma))||_{1+r}+\tau^r||U_n(-\sigma)||_{1+r}+\tau\cdot\max_{s\in[-\tau,\tau]}||U_n(s)||_0\nonumber\\
    \lesssim &\tau^r\left(1+\max_{s\in[-\tau,\tau]}||U_n(s)||_{1+r}+\max_{s\in[-\tau,\tau]}||U_n(s)||_0\right).
\end{align}
Substituting (\ref{e_1}), (\ref{e_2}) and (\ref{e_3}) into (\ref{R_1}) follows
\begin{equation}\label{err R_1}
    ||R_1(t_n)||_1\lesssim \tau^{2+r}\left(1+\max_{s\in[-\tau,\tau]}||U_n(s)||_{1+r}+\max_{s\in[-\tau,\tau]}||U_n(s)||_0\right)^2.
\end{equation}

We now turn to $R_2(t_n)$. Analogously to the case of $R_1(t_n)$, define the integrand by 
\[
Y(\sigma):=e^{-\sigma L}F(e^{\sigma L}U(t_{n})),\quad 0\le|\sigma|\le\tau.
\]
Then, it is obtained that
\begin{equation}\label{R_2}
    R_2(t_n)=e^{\tau L}\int_{-\tau}^{\tau}\int_0^sY'(\sigma)\dd\sigma\dd s=e^{\tau L}\int_{0}^{\tau}\int_0^sY'(\sigma)-Y'(-\sigma)\dd\sigma\dd s.
\end{equation}
Computing directly gives 
\begin{align}
    Y'(\sigma)-Y'(-\sigma)=&\left.e^{-\sigma L}\left(\begin{array}{c}-f(\tilde{u}(t_n+\sigma))\\f^{\prime}(\tilde{u}(t_n+\sigma))\tilde{v}(t_n+\sigma)\end{array}\right.\right)-\left.e^{\sigma L}\left(\begin{array}{c}-f(\tilde{u}(t_n-\sigma))\\f^{\prime}(\tilde{u}(t_n-\sigma))\tilde{v}(t_n-\sigma)\end{array}\right.\right)\nonumber\\
    =&e^{-\sigma L}\underbrace{\begin{pmatrix}
                   f(\tilde{u}(t_n-\sigma))-f(\tilde{u}(t_n+\sigma))\\
                   f^{\prime}(\tilde{u}(t_n+\sigma))\tilde{v}(t_n+\sigma)-f^{\prime}(\tilde{u}(t_n-\sigma))\tilde{v}(t_n-\sigma)
                   \end{pmatrix}}_{Y_1(t_n,\sigma)}\nonumber\\
    &+\underbrace{(e^{-\sigma L}-e^{\sigma L})\begin{pmatrix}
                   -f(\tilde{u}(t_n-\sigma))\\
                   f^{\prime}(\tilde{u}(t_n-\sigma))\tilde{v}(t_n-\sigma)
                   \end{pmatrix}}_{Y_2(t_n,\sigma)}\nonumber\\
    :=&e^{-\sigma L}Y_1(t_n,\sigma)+Y_2(t_n+\sigma).
\end{align}
The following estimate holds for $Y_1(t_n,\sigma)$:
\begin{align}\label{Y_1}
||Y_1(t_n,\sigma)||_1\lesssim&||f(\tilde{u}(t_n-\sigma))-f(\tilde{u}(t_n+\sigma))||_{H^1(\mathbb T )}\nonumber\\
    &+||f^{\prime}(\tilde{u}(t_n+\sigma))\tilde{v}(t_n+\sigma)-f^{\prime}(\tilde{u}(t_n-\sigma))\tilde{v}(t_n-\sigma)||_{L^2(\mathbb T )}\nonumber\\
    \lesssim & ||f'(\tilde{u}(t_n+\sigma))\nabla\tilde{u}(t_n+\sigma)-f'(\tilde{u}(t_n-\sigma))\nabla\tilde{u}(t_n-\sigma)||_{L^2(\mathbb T )}\nonumber\\
    &+||\tilde{u}(t_n+\sigma)-\tilde{u}(t_n-\sigma)||_{L^2(\mathbb T )}+||\tilde{v}(t_n+\sigma)-\tilde{v}(t_n-\sigma)||_{L^2(\mathbb T )}\nonumber\\
    \lesssim&||\nabla\tilde{u}(t_n+\sigma)-\nabla\tilde{u}(t_n-\sigma)||_{L^2(\mathbb T )}+||\tilde{v}(t_n+\sigma)-\tilde{v}(t_n-\sigma)||_{L^2(\mathbb T )}\nonumber\\
    &+(1+||\nabla\tilde{u}(t_n+\sigma)||_{L^2})||\tilde{u}(t_n+\sigma)-\tilde{u}(t_n-\sigma)||_{L^2(\mathbb T )}\nonumber\\
    \lesssim &(1+||U(t_n)||_1)||(e^{\sigma L}-e^{-\sigma L})U(t_n)||_1\nonumber\\
    \lesssim &\tau^r(1+||U(t_n)||_{1+r})^2.
\end{align}
For $Y_2(t_n,\sigma)$, we have
\begin{align}\label{Y_2}
    ||Y_2(t_n,\sigma)||_1\lesssim&\tau^r(||f(\tilde{u}(t_n-\sigma))||_{H^{1+r}(\mathbb T )}+||\tilde{u}(t_n-\sigma)||_{H^r(\mathbb T )})\nonumber\\
    \lesssim & \tau^r(1+||f(\tilde{u}(t_n-\sigma))-f(0)||_{H^{1+r}(\mathbb T )}+||\tilde{u}(t_n-\sigma)||_{H^r(\mathbb T )})\nonumber\\
    \lesssim &\tau^r(1+||\nabla\tilde{u}(t_n-\sigma)||_{H^r(\mathbb T )}+2||\tilde{u}(t_n-\sigma)||_{H^r(\mathbb T )})\nonumber\\
    \lesssim &\tau^r(1+2||U(t_n)||_{1+r}.
\end{align}
Therefore combining (\ref{Y_1}), (\ref{Y_2}) and (\ref{R_2}), we obtain the estimation
\begin{align}\label{err R_2}
    ||R_2(t_n)||_1\lesssim &\tau^{2}(||Y_1(t_n,\sigma)||_1+||Y_2(t_n,\sigma)||_1)\nonumber\\
    \lesssim &\tau^{2+r}\left(1+||U(t_n)||_{1+r}\right)^2.
\end{align}

Let $\mathbf{U}(t_{n})=(U(t_{n}),U(t_{n-1}))^T$. Plugging (\ref{err R_1}) and (\ref{err R_2}) into (\ref{err R}) gives the local truncation error estimate
\begin{equation}\label{local err}
        |||\mathbf{U}(t_{n+1})-\bm{\Phi}_\tau(\mathbf{U}^n)|||_{1}
        =||\mathcal R_{\tau}(t_n)-e^{2\tau L}\mathcal R_{-\tau}(t_n)||_1
        \le C_1\left(\sup_t||U(\cdot,t)||_{1+r}\right)\tau^{2+r}.
\end{equation}
For the global error, we first obtain the following estimate from (\ref{stable sLRI1}) and (\ref{local err}):
\begin{align}\label{global err}
    ||U(t_n)-U^n||_1\le &|||\mathbf{U}(t_{n})-\mathbf{U}^n|||_{1}\nonumber\\
    \le &|||\bm{\Phi}_\tau(\mathbf{U}(t_n))-\bm{\Phi}_\tau(\mathbf{U}^n)|||_{1}+|||\mathbf{U}(t_{n+1})-\bm{\Phi}_\tau(\mathbf{U}^n)|||_{1}\nonumber\\
    \le &(1+\tau M(||U^n||_1))|||\mathbf{U}(t_{n})-\mathbf{U}^n|||_{1}+C_1\tau^{2+r}.
\end{align}
Then, applying the discrete Gronwall inequality yields that the global error is of order $1+r$, where the uniform upper bound for $M(||U^n||_1)$ in (\ref{global err}) can be obtained via induction.

\end{proof}

\begin{remark}
The conclusion in (\ref{converge sLRI1}) requires that the exact solution has $(1+r)$-th order regularity, namely
\[
U\in L^{\infty}(0,T;H^{1+r}(\Bbb T)\times H^{r}(\Bbb T)),\quad r\in[0,1).
\]
This indicates that, whenever the exact solution possesses $r$-th order of regularity, the convergence order of the scheme improves by $r$-th orders, a gain that stems precisely from the symmetry of the scheme (\ref{sLRI2}). It is worth noting that, for the Schrödinger equation (cf. \cite{Feng2024}), an analogous result yields only an improvement of $r/2$-th orders in the convergence order. This discrepancy arises from the different properties of the semigroups generated by the operator $L$ and the operator $i\Delta$.
\end{remark}

\subsection{Convergence analysis for sLRI2}\label{sec:convergence sLRI2}
Based on the analysis given in the above section, 
we can deduce the convergence for sLRI2. 
In a similar way, we first study the stability of (\ref{sLRI2}).
\begin{proposition}\label{prop stable 2}
 Consider the correction term for (\ref{matrix form}) defined as
 \[
 \Psi_{\tau}(U^n)=\tau e^{\tau L}F(U^n)+\tau^2e^{\tau L}\varphi_2(-2\tau L)H(U^n),
 \]
 where $\mathbf{U}=(U_1,U_2)^T, \mathbf{V}=(V_1,V_2)^T \in [H^{s}(\Bbb T)\times H^{s-1}(\Bbb T)]^2$. For $d=1,2,3$ and $\max(1,d/2)< s\le2$, there exists a sufficiently small $\tau_0>0$ and a constant $M$ independent of $\tau$ satisfying, for all $0<\tau\le\tau_0$,
\begin{equation}\label{stable sLRI2}
|||\bm{\Phi}_\tau(\mathbf{U})-\bm{\Phi}_\tau(\mathbf{V})|||_s\le (1+\tau M(||U_1||_s,||V_1||_s)) |||\mathbf{U}-\mathbf{V}|||_s.
    \end{equation}
\end{proposition}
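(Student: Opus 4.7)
The plan is to invoke Remark \ref{stable proof} directly. That remark reduces stability of any symmetric scheme of the form (\ref{general scheme}) to two ingredients: (i) a bound $\|e^{2\tau L}\| \le 1+C\tau$ on the semigroup, and (ii) a uniform Lipschitz estimate on the one-step increment $\Psi_\tau$. Ingredient (i) holds trivially with $C=0$, since $\{e^{tL}\}_{t\in\br}$ is an isometric $C_0$-group on $H^s(\mathbb T)\times H^{s-1}(\mathbb T)$ for $s\in[1,2]$. The whole task therefore collapses to proving
\[
\|\Psi_\tau(U_1)-\Psi_\tau(V_1)\|_s\le \tau\, M(\|U_1\|_s,\|V_1\|_s)\,\|U_1-V_1\|_s.
\]

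Split $\Psi_\tau$ into the Lie-splitting part $\tau e^{\tau L}F(U_1)$ and the new correction $\tau^2 e^{\tau L}\varphi_2(-2\tau L)H(U_1)$. The first part is identical to the sLRI1 situation: using the isometry of $e^{\tau L}$ and the Kato-Ponce/mean-value calculation from (\ref{F(U)-F(V)}) (now applied at Sobolev index $s$ rather than $s-1$, which is legitimate since $s>d/2$), one obtains $\|\tau e^{\tau L}(F(U_1)-F(V_1))\|_s\lesssim \tau\, M_1(\|U_1\|_s,\|V_1\|_s)\|U_1-V_1\|_s$. So only the new correction term requires real work.

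For the correction, I would first use the integral representation
\[
\varphi_2(-2\tau L)=\int_0^1(1-\theta)\,e^{-2\tau\theta L}\,\mathrm d\theta,
\]
which, combined with the uniform boundedness of $e^{-2\tau\theta L}$ on $H^s\times H^{s-1}$, shows that $\varphi_2(-2\tau L)$ has operator norm $\le 1/2$ independently of $\tau$. It then remains to establish a Lipschitz bound $\|H(U_1)-H(V_1)\|_s\lesssim M_2(\|U_1\|_s,\|V_1\|_s)\|U_1-V_1\|_s$. The first component $\|f(u_1)-f(v_1)\|_{H^s}$ is estimated exactly as in (\ref{F(U)-F(V)}) but at index $s$, using the mean-value trick together with a Moser-type bound $\|f'(w)\|_{H^s}\lesssim 1+\|w\|_{H^s}$ (available since $s>d/2$ and $f'$ is globally Lipschitz). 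The second component $\|f'(u_1)\partial_t u_1-f'(v_1)\partial_t v_1\|_{H^{s-1}}$ is split as $(f'(u_1)-f'(v_1))\partial_t u_1+f'(v_1)(\partial_t u_1-\partial_t v_1)$ and each summand controlled via the Kato-Ponce bound $\|fg\|_{H^{s-1}}\lesssim\|f\|_{H^s}\|g\|_{H^{s-1}}$.

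Stitching everything together yields $\|\Psi_\tau(U_1)-\Psi_\tau(V_1)\|_s\le\tau(M_1+\tau M_2)\|U_1-V_1\|_s$, which for $\tau\le\tau_0$ is absorbed into a single $\tau M$, and (\ref{stable sLRI2}) follows from Remark \ref{stable proof}. The main obstacle is conceptual rather than computational: the prefactor $\tau^2$ in the correction sits in front of $\varphi_2(-2\tau L)$, whose definition involves the unbounded inverse $L^{-2}$, so one must verify that no hidden $\tau^{-2}$ is masked inside. The integral representation above resolves this cleanly by showing that $\varphi_2(-2\tau L)$ is genuinely bounded on the energy scale uniformly in $\tau$, so that the $\tau^2$ scaling is preserved and the correction contributes only an $O(\tau)$ perturbation after absorbing one power of $\tau$ into the constant $\tau_0$.
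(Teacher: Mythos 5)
Your argument is correct and follows essentially the same route as the paper: reduce to a Lipschitz bound on $\Psi_\tau$ via Remark \ref{stable proof}, reuse (\ref{F(U)-F(V)}) for the Lie part, represent $\varphi_2(-2\tau L)$ as the integral $\int_0^1(1-\theta)e^{-2\tau\theta L}\,\mathrm d\theta$ to see it is bounded uniformly in $\tau$, and control $H(U_1)-H(V_1)$ componentwise by the mean-value trick and Kato--Ponce. The only nitpick is that $e^{tL}$ is isometric only in the homogeneous energy norm $|\cdot|_\alpha$ (on the zero Fourier mode it grows like $1+|t|$ in $\|\cdot\|_s$), so ingredient (i) holds with some $C>0$ rather than $C=0$; this does not affect the conclusion.
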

\begin{proof}
    For $j=1,2$, let $[U_1]_j=u_j$ and $[V_1]_j=v_j$. Firstly, we have
    \begin{equation}\label{only need to prove}
        ||\Psi_{\tau}(U_1)-\Psi_{\tau}(V_1)||_s\le \tau||F(U_1)-F(V_1)||_s+||\tau^2 e^{\tau L}\varphi_2(-2\tau L)(H(U_1)-H(V_1))||_s.
    \end{equation}
By the (\ref{F(U)-F(V)}), we obtain 
\[
||F(U_1)-F(V_1)||_s\le M(||U_1||_s,||V_1||_s)|||\mathbf{U}-\mathbf{V}|||_s.
\]
Then, it can be derived that
\begin{align}\label{Psi(U)-Psi(V)}
    &||\tau^2e^{\tau L}\varphi_2(-2\tau L)(H(U_1)-H(V_1))||_s\nonumber\\
    =&\Bigg|\Bigg|\int_0^t e^{-2sL}(t-s)(H(U_1)-H(V_1))\dd s\Bigg|\Bigg|_s\nonumber\\
    \le&\tau^2\left(||f(u_1)-f(v_1)||_{H^{s}(\mathbb T )}+||f'(u_1)u_2-f'(v_1)v_2||_{ H^{s-1}(\mathbb T )}\right)\nonumber\\
    \lesssim &\tau^2(||\mu(u_1,v_1)||_{H^{s}(\mathbb T )}||u_1-v_1||_{H^{s}(\mathbb T )})\nonumber\\
    &+\tau^2\left(||f'(u_1)(u_2-v_2)||_{H^{s-1}(\mathbb T )}+||(f'(u_1)-f'(v_1))v_2||_{ H^{s-1}(\mathbb T )}\right)\nonumber\\
    \lesssim &\tau^2(1+||u_1||_{H^{s}(\mathbb T )}+||v_1||_{H^{s}(\mathbb T )})|||\mathbf{U}-\mathbf{V}|||_s\nonumber\\
    &+ \tau^2(||f'(u_1)||_{ H^{s-1}(\mathbb T )}|||\mathbf{U}-\mathbf{V}|||_s+||f'(u_1)-f'(v_1)||_{H^{s-1}(\mathbb T )}||v_2||_{H^{s}(\mathbb T )})\nonumber\\
    \le & \tau^2C(||U_1||_s,||V_1||_s)|||\mathbf{U}-\mathbf{V}|||_s.
\end{align}
By the remark \ref{stable proof}, combining the (\ref{F(U)-F(V)}), (\ref{only need to prove}) and (\ref{Psi(U)-Psi(V)}), we proved the stability \eqref{stable sLRI2} of (\ref{sLRI2}). 
\end{proof}

Since that in this case the scheme (\ref{LBY LR2}) is of even order, we cannot expect any improvement in the convergence order by symmetrisation. The local error of (\ref{LBY LR2}) was studied in \cite{Li2023}, combining the \cite[Section 2.2]{Li2023} and (\ref{stable sLRI2}) we obtain the global error bound of sLRI2 (\ref{sLRI2}).

\begin{theorem}\label{converge sLRI2}
    For $d=1,2,3$ and $(u^0,v^0)\in H^s(\Bbb T)\times H^{s-1}(\Bbb T)$ with $1+d/4<s\le 2$, the numerical solution $U^n$ given by (\ref{sLRI1}) satisfies the error bound
    \begin{equation}
        ||U(t_n)-U^n||_1\le C_2(T,r)\tau^{2},\quad 0\le n\le T/\tau,
    \end{equation}
    where the time step size $\tau$ is sufficiently  small and $C_2$ is a positive constant independent of the step size $\tau$ but may depend on $T$ and $r$.
\end{theorem}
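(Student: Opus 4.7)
The plan is to reduce the proof to the combination of the stability estimate already established in Proposition~\ref{prop stable 2} with the known local error analysis of the underlying one-step scheme (\ref{LBY LR2}) from \cite{Li2023}, and then to close the argument by a standard discrete Gronwall induction.

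The first step is to identify the local truncation error of sLRI2. Writing the base low-regularity corrected Lie splitting in the form
\begin{equation*}
    U(t_{n+1}) = e^{\tau L} U(t_n) + \Psi_\tau(U(t_n)) + \mathcal{R}_\tau^{\mathrm{LS}}(t_n),
\end{equation*}
with $\Psi_\tau$ as in Proposition~\ref{prop stable 2}, the analysis of \cite[Section~2.2]{Li2023} yields $\|\mathcal{R}_\tau^{\mathrm{LS}}(t_n)\|_1 \lesssim \tau^3$ provided $U\in L^\infty(0,T;H^s\times H^{s-1})$ with $1+d/4<s\le 2$. By the general symmetrisation identity leading to (\ref{general scheme}), the truncation error of sLRI2 at $t_n$ is exactly
\begin{equation*}
    \mathcal{R}_\tau(t_n) - e^{2\tau L}\mathcal{R}_{-\tau}(t_n),
\end{equation*}
so the isometry of $e^{tL}$ in $\|\cdot\|_1$ and the bound on $\mathcal{R}_\tau^{\mathrm{LS}}$ together give a local error of order $\tau^3$ in $|||\cdot|||_1$. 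The starting step $U^1$, which is a single application of the base scheme, inherits the same $\mathcal{O}(\tau^3)$ local error, so no loss of global order arises from initialisation.

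The second step is to pass from the local error to the global error. Using Proposition~\ref{prop stable 2} with $s=1$ upgraded to $s\in(1,2]$ as required by the assumption $1+d/4 < s \le 2$ (so that the product estimates in the proof of the stability apply), the telescoping inequality
\begin{equation*}
    |||\mathbf{U}(t_{n+1})-\mathbf{U}^{n+1}|||_1
    \le \bigl(1+\tau M(\|U(t_n)\|_s,\|U^n\|_s)\bigr)\,|||\mathbf{U}(t_n)-\mathbf{U}^n|||_1 + C\tau^3
\end{equation*}
holds, analogously to (\ref{global err}). To apply the discrete Gronwall lemma one needs a uniform bound on the numerical solution in the $H^s$-norm. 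I would obtain this by induction: under the hypothesis $\|U^k\|_s \le 1 + \|U(t_k)\|_s$ for all $k\le n$, the global error bound of order $\tau^2$ forces $\|U^{n+1}\|_s \le \|U(t_{n+1})\|_s + C\tau^2$, which closes the induction for $\tau$ sufficiently small. Combining the stability constant $M$, which is now uniformly bounded along the orbit, with Gronwall gives $|||\mathbf{U}(t_n)-\mathbf{U}^n|||_1 \le C_2(T)\tau^2$, and hence the stated bound on $\|U(t_n)-U^n\|_1$.

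The main obstacle is the inductive bootstrap of the $H^s$-norm of $U^n$ needed to keep the stability constant $M(\|U^n\|_s,\|V^n\|_s)$ uniformly controlled. Unlike the first-order theorem, here the regularity assumption $s>1+d/4$ is exactly what makes Kato--Ponce available uniformly in $n$, and care is needed to verify that the higher-order correction $\tau^2 e^{\tau L}\varphi_2(\pm 2\tau L) H(U^n)$ appearing in $\Psi_\tau$ does not deteriorate the $H^s$-stability; this is precisely what Proposition~\ref{prop stable 2} establishes, so once the induction is carried through, the rest is routine. No improvement of order comparable to sLRI1 is expected since the base scheme is already of even order two and symmetrisation only preserves, but does not elevate, the order in this case.
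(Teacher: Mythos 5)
Your proposal is correct and follows essentially the same route as the paper: cite the $\mathcal{O}(\tau^3)$ local error of the corrected Lie splitting from \cite[Section 2.2]{Li2023}, observe that symmetrisation leaves the truncation error of the form $\mathcal{R}_\tau(t_n)-e^{2\tau L}\mathcal{R}_{-\tau}(t_n)$ and hence still of order $\tau^3$, then combine with the stability of Proposition~\ref{prop stable 2} and a discrete Gronwall/induction argument to obtain the global $\tau^2$ bound. Your added remarks on the starting step and the inductive $H^s$-bootstrap of the numerical solution are details the paper leaves implicit but do not change the argument.
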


\begin{proof}
    Firstly, the local truncation error estimate studied in \cite[Theorem 1.1]{Li2023} yields 
    \begin{equation*}
        |||\mathbf{U}(t_{n+1})-\bm{\Phi}_\tau(\mathbf{U}^n)|||_{1}\le \tilde{C_2}\tau^{3},
    \end{equation*}
    where $\tilde{C_2}$  is some positive constant independent of the step size $\tau$. Then we have 
    \begin{equation*}
        |||\bm{\Phi}_\tau(\mathbf{U})-\bm{\Phi}_\tau(\mathbf{V})|||_s\le (1+\tau M(||U_1||_s,||V_1||_s)) |||\mathbf{U}-\mathbf{V}|||_s
    \end{equation*}
    by the Proposition \ref{prop stable 2}. Thus it is deduced that
    \begin{align*}
    ||U(t_n)-U^n||_1\le &|||\mathbf{U}(t_{n})-\mathbf{U}^n|||_{1}\nonumber\\
    \le &|||\bm{\Phi}_\tau(\mathbf{U}(t_n))-\bm{\Phi}_\tau(\mathbf{U}^n)|||_{1}+|||\mathbf{U}(t_{n+1})-\bm{\Phi}_\tau(\mathbf{U}^n)|||_{1}\nonumber\\
    \le &(1+\tau M(||U_1||_s,||V_1||_s))|||\mathbf{U}(t_{n})-\mathbf{U}^n|||_{1}+\tilde{C_2}\tau^{3}.
\end{align*}
Using the discrete Gronwall inequality and induction we obtain
\begin{equation*}
        ||U(t_n)-U^n||_1\le C_2(T,r)\tau^{2},\quad 0\le n\le T/\tau.
    \end{equation*}
\end{proof}

\section{Numerical experiments}\label{sec:Numerical experiments}
In this section, we present numerical experiments for the Klein-Gordon equation (\ref{NKGE}) on the time interval $T=1$ with the nonlinear term $f(u)=\sin(u)$, explicitly, this can be written as
\begin{equation}\label{numerical}
    \begin{cases}\partial_{t t} u-\partial_{xx} u=\sin(u) & \text { in } \bbt \times(0, 1], \\ u(x,0)=u^0(x),\ \partial_t u(x,0)=v^0(x) & \text { in } \bbt.\end{cases}
\end{equation}
The choice of a sine-type nonlinearity is motivated by its widespread appearance in physical models, where equations of this form arise naturally in, for example, relativistic field theories and lattice dynamics. For the temporal discretization, we employ the symmetric low-regularity integrators introduced in this paper, while in space we use a Fourier spectral method with $N=2^{12}$ grid points $x_j=j\pi/N$ with $j=-N,-N+1,\cdots,N-1$. The experiments investigate the convergence behavior, computational efficiency and  long-time preservation of the energy of sLRI1 (\ref{sLRI1}) and sLRI2 (\ref{sLRI2}) under various initial data. Special emphasis is placed on comparing sLRI1 with LRI1 and sLRI2 with LRI2 in order to highlight the differences in accuracy, performance, and long-time stability between the symmetric and nonsymmetric schemes.

The above-mentioned numerical techniques are:
\begin{itemize}
    \item \textbf{LRI1} in \cite{Li2023}:
    \begin{equation*}
        \begin{pmatrix}u_{n+1}\\v_{n+1}\end{pmatrix}=e^{\tau L}\begin{pmatrix}u_{n}\\v_{n}\end{pmatrix}+\tau e^{\tau L}\begin{pmatrix}0\\\sin(u_n)\end{pmatrix}.
    \end{equation*}
    \item \textbf{sLRI1} in (\ref{sLRI1}):
    \begin{equation*}
        \begin{aligned}
            \begin{pmatrix}u_{n+1}\\v_{n+1}\end{pmatrix}=&e^{2\tau L}\begin{pmatrix}u_{n-1}\\v_{n-1}\end{pmatrix}+2\tau e^{\tau L}\begin{pmatrix}0\\\sin(u_n)\end{pmatrix},\\
            \begin{pmatrix}u_{1}\\v_{1}\end{pmatrix}=&e^{\tau L}\begin{pmatrix}u_{0}\\v_{0}\end{pmatrix}+\tau e^{\tau L}\begin{pmatrix}0\\\sin(u_0)\end{pmatrix}.
        \end{aligned}
    \end{equation*}
    \item \textbf{LRI2} in \cite{Li2023}:
    \begin{equation*}
        \begin{aligned}
            \begin{pmatrix}u_{n+1}\\v_{n+1}\end{pmatrix}=&e^{\tau L}\begin{pmatrix}u_{n}\\v_{n}\end{pmatrix}+\tau e^{\tau L}\begin{pmatrix}0\\\sin(u_n)\end{pmatrix}\\&+(2L)^{-1}\left[\tau e^{\tau L}-(2L)^{-1}(e^{\tau L}-e^{-\tau L})\right]\begin{pmatrix}-\sin(u_n)\\\cos(u_n)v_n\end{pmatrix}.
        \end{aligned}
    \end{equation*}
    \item \textbf{sLRI2} in (\ref{sLRI2}):
    \begin{equation*}
        \begin{aligned}
            \begin{pmatrix}u_{n+1}\\v_{n+1}\end{pmatrix}=&e^{2\tau L}\begin{pmatrix}u_{n-1}\\v_{n-1}\end{pmatrix}+2\tau e^{\tau L}\begin{pmatrix}0\\\sin(u_n)\end{pmatrix}\\&+(2L)^{-1}[2\tau e^{\tau L}-(2L)^{-1}(e^{3\tau L}-e^{-\tau L})]\begin{pmatrix}-\sin(u_n)\\\cos(u_n)v_n\end{pmatrix},\\
            \begin{pmatrix}u_{1}\\v_{1}\end{pmatrix}=&e^{\tau L}\begin{pmatrix}u_{0}\\v_{0}\end{pmatrix}+\tau e^{\tau L}\begin{pmatrix}0\\\sin(u_0)\end{pmatrix}.
        \end{aligned}
    \end{equation*}
\end{itemize}

We generate an $H^\theta\times H^{\theta-1}$-initial datum $(u^0,v^0)$ through

\begin{equation*}
    u^0(x)=\frac{\phi_1(x)}{\|\phi_1\|_{H^1}}, \quad \phi_1(x)=\sum_{l=-N}^{N-1} \frac{\xi_l}{\langle l\rangle^{\theta+1 / 2}} e^{i l x}, \quad x \in \mathbb{T},
\end{equation*}
and
\begin{equation*}
    v^0(x)=\frac{\phi_2(x)}{\|\phi_2\|_{L^2}}, \quad \phi_2(x)=\sum_{l=-N}^{N-1} \frac{\zeta_l}{\langle l\rangle^{\theta-1 / 2}} e^{i l x}, \quad x \in \mathbb{T},
\end{equation*}
where $\xi_l$ and $\zeta_l$ are both random numbers uniformly distributed in $[0,1],  N=2^{12}$ being the maximum frequency of the numerical initial datum (which is chosen much larger than the maximum frequency of the numerical solution), and

$$
\langle s\rangle=\left\{\begin{array}{ll}
|s|, & s \neq 0, \\
1, & s=0,
\end{array} \quad s \in \mathbb{R}.\right.
$$

\textbf{Accuracy}.

To estimate the global errors at $T=1$, we introduce the following error function
\begin{equation*}
    \mathrm{err}(T):=\frac{||u_n-u(T)||_{H^1(\Bbb T)}}{||u(T)||_{H^1(\Bbb T)}}+\frac{||v_n-v(T)||_{L^2(\Bbb T)}}{||v(T)||_{L^2(\Bbb T)}}.
\end{equation*}

\begin{figure}[t!]
    \centering
    \begin{minipage}{0.45\textwidth}
        \centering
        \includegraphics[width=\textwidth]{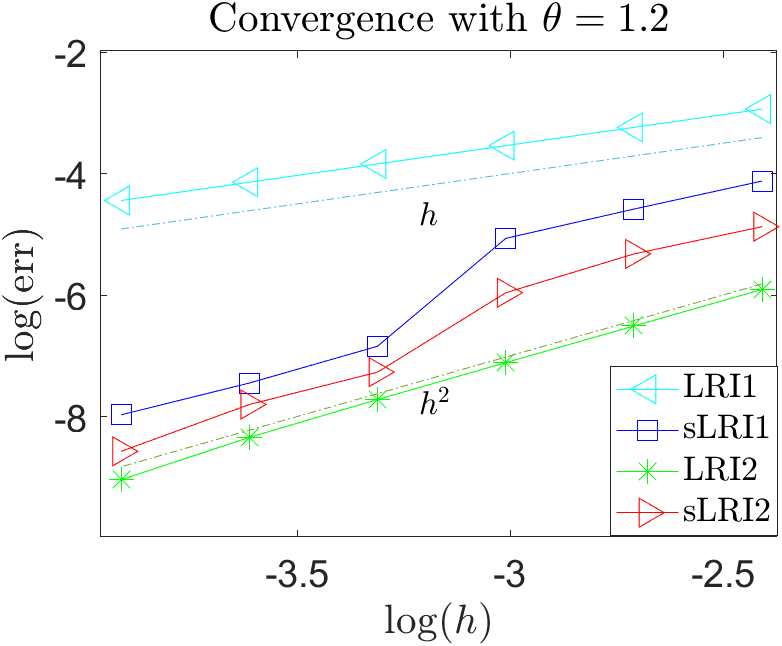}
          
    \end{minipage}
    \hfill
    % 右侧图片，手动添加 
    \begin{minipage}{0.45\textwidth}
        \centering
        \includegraphics[width=\textwidth]{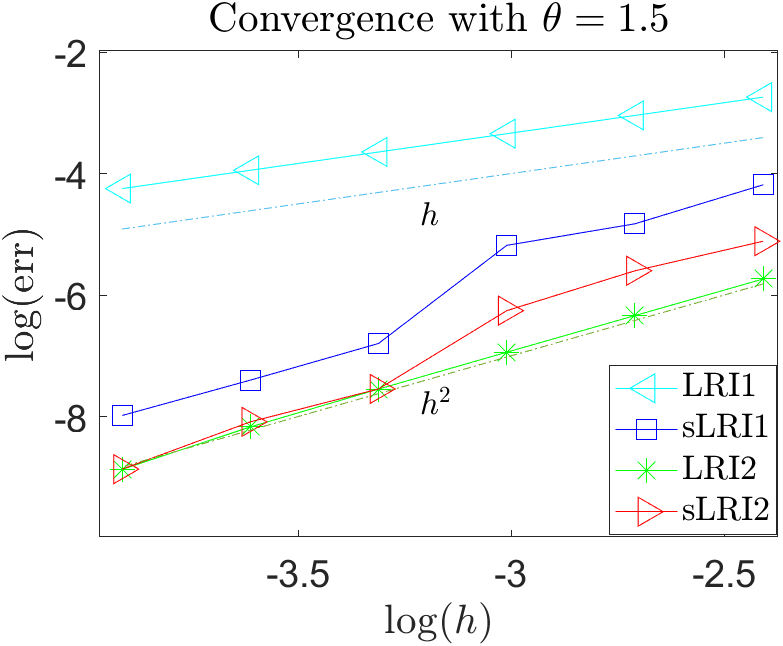}
          
    \end{minipage}
    
    \caption{Global errors $\mathrm{err}(1)$ of the the differential schemes for the NLSE (\ref{numerical}) with initial data in $H^{\theta}(\Bbb T)$ of regularity $\theta=1.0$  and $\theta=1.5$ .}
    \label{1.2 1.5}
\end{figure}
We first present the convergence orders of sLRI1 and sLRI2 under rough initial conditions, i.e. $\theta=1.2$ and $\theta=1.5$. The convergence orders shown in figure \ref{1.2 1.5} reflect the low-regularity requirements of the algorithms. Next, we provide the convergence orders of sLRI1 and sLRI2 under smoother initial conditions ($\theta\ge2$). Figure \ref{2 10} shows that their convergence orders are both $2$, which aligns with the results of Theorem \ref{converge sLRI1} and Theorem \ref{converge sLRI2}. Subsequently, we focus on demonstrating how the convergence order of sLRI1 varies with initial data of different regularities. As can be seen from Figures \ref{1.0 1.2}, \ref{1.4 1.6} and \ref{1.8 2}, as the regularity index $\theta$ increases within the interval [1, 2], the convergence order of sLRI1 indeed improves accordingly, until it reaches order 2 when $\theta\ge2$. This precisely confirms the results of our Theorem \ref{converge sLRI1}.

\begin{figure}[t!]
    \centering
    \begin{minipage}{0.45\textwidth}
        \centering
        \includegraphics[width=\textwidth]{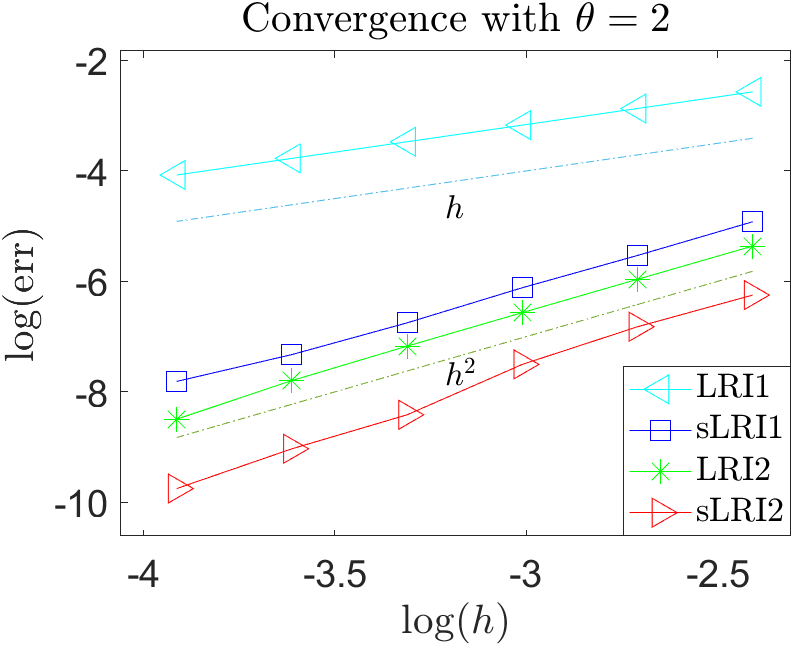}
        
    \end{minipage}
    \hfill
    % 右侧图片，手动添加 
    \begin{minipage}{0.45\textwidth}
        \centering
        \includegraphics[width=\textwidth]{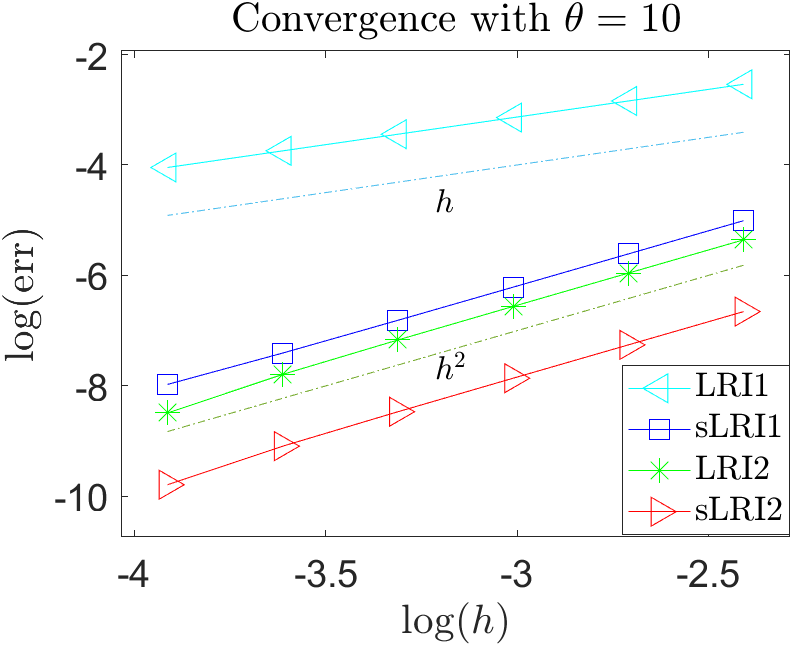}
        
    \end{minipage}
    
    \caption{Global errors $\mathrm{err}(1)$  of the differential schemes for the NLSE (\ref{numerical}) with smooth initial data of regularity $\theta=2$  and $\theta=10$ .}
    \label{2 10}
\end{figure}

\begin{figure}[t!]
    \centering
    \begin{minipage}{0.45\textwidth}
        \centering
        \includegraphics[width=\textwidth]{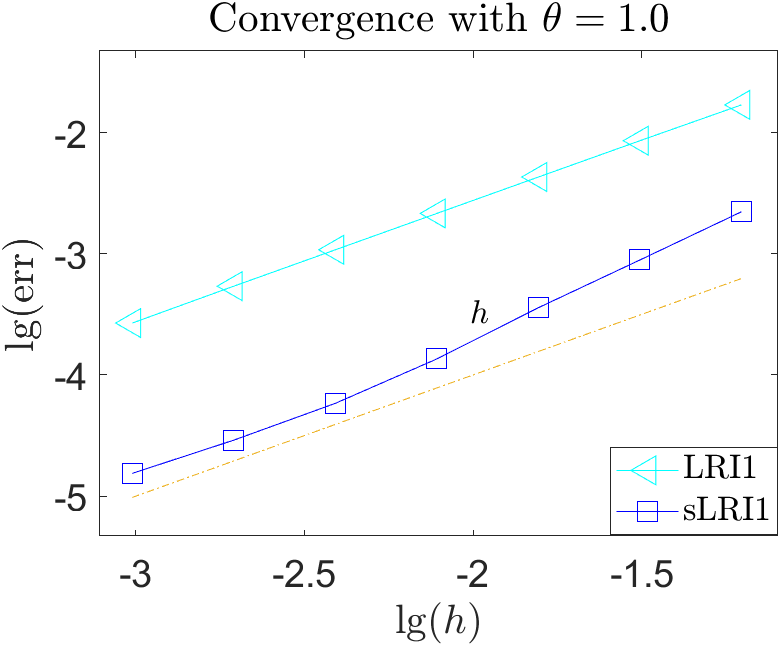}
        
    \end{minipage}
    \hfill
    % 右侧图片，手动添加 
    \begin{minipage}{0.45\textwidth}
        \centering
        \includegraphics[width=\textwidth]{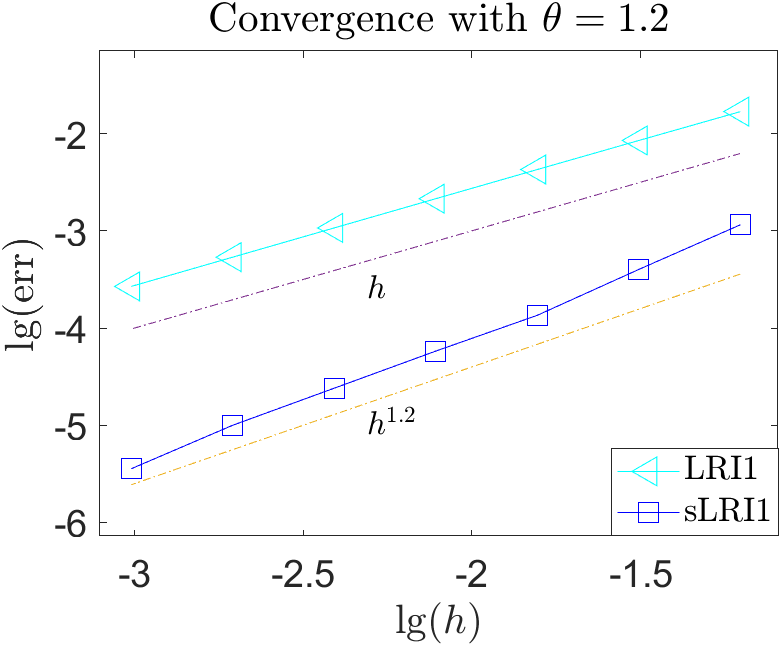}
        
    \end{minipage}
    
    \caption{The convergence orders of sLRI1 under regularity regimes $\theta=1$ and $\theta=1.2$ .}
    \label{1.0 1.2}
\end{figure}

\begin{figure}[t!]
    \centering
    \begin{minipage}{0.45\textwidth}
        \centering
        \includegraphics[width=\textwidth]{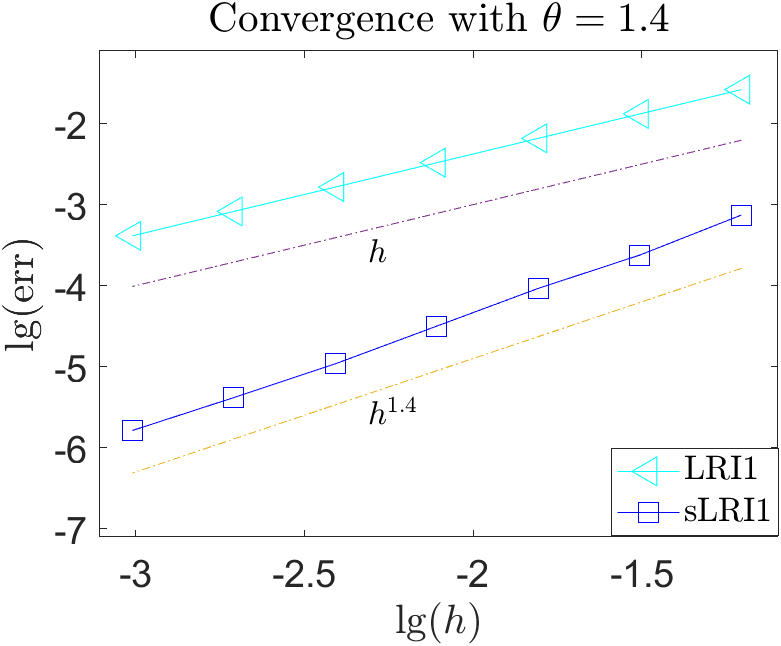}
        
    \end{minipage}
    \hfill
    % 右侧图片，手动添加 
    \begin{minipage}{0.45\textwidth}
        \centering
        \includegraphics[width=\textwidth]{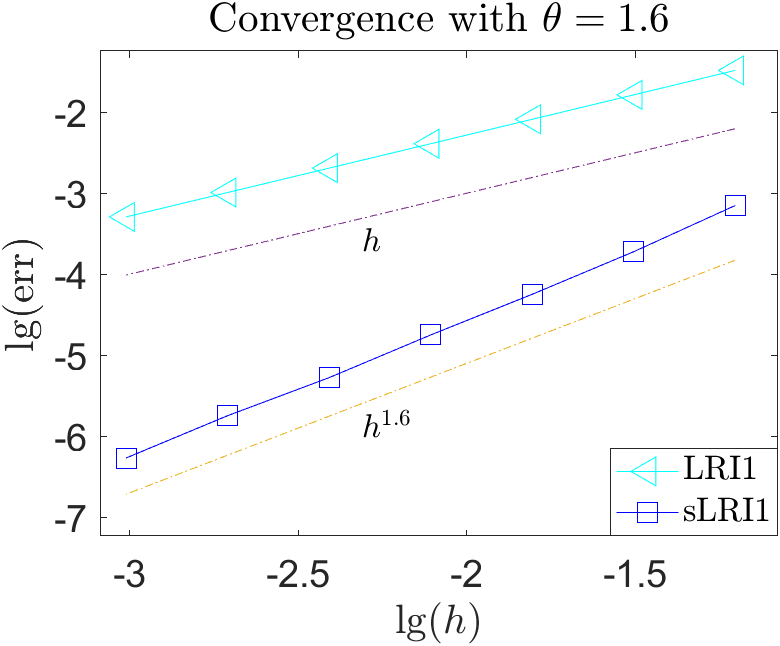}
        
    \end{minipage}
    
    \caption{The convergence orders of sLRI1 under regularity regimes $\theta=1.4$ and $\theta=1.6$ .}
    \label{1.4 1.6}
\end{figure}

\begin{figure}[t!]
    \centering
    \begin{minipage}{0.45\textwidth}
        \centering
        \includegraphics[width=\textwidth]{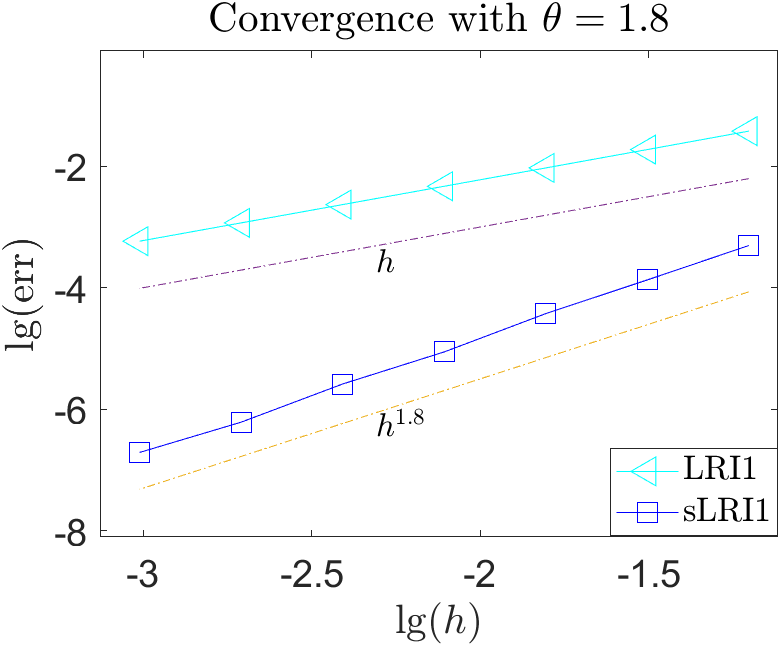}
        
    \end{minipage}
    \hfill
    % 右侧图片，手动添加 
    \begin{minipage}{0.45\textwidth}
        \centering
        \includegraphics[width=\textwidth]{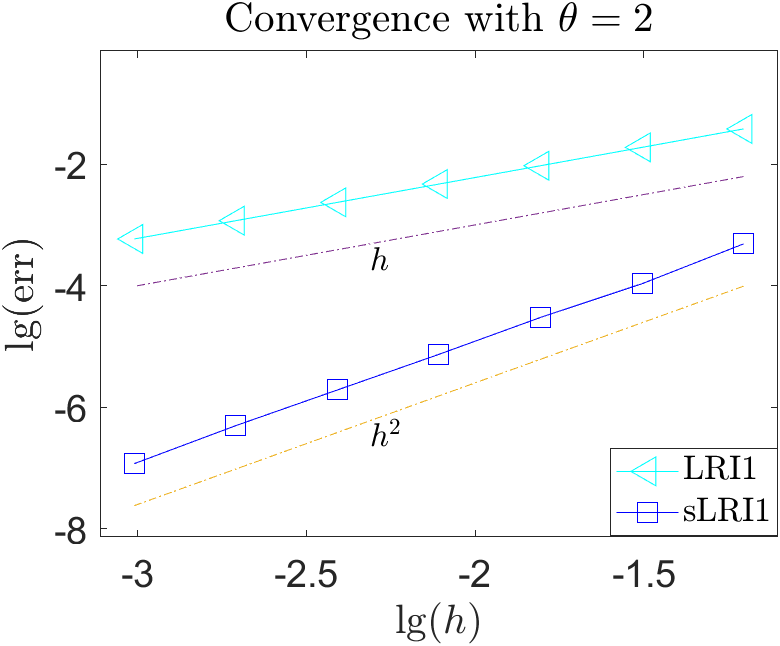}
        
    \end{minipage}
    
    \caption{The convergence orders of sLRI1 under regularity regimes $\theta=1.8$ and $\theta=2$ .}
    \label{1.8 2}
\end{figure}

\textbf{Efficiency}.

The subsequent numerical experiment on CPU time aims to reveal the relationship between computational complexity and actual runtime for the aforementioned single-step scheme and the symmetric two-step method. Figure \ref{CPU 1.2 1.5} and \ref{CPU 1.8 10} gives a plot of Cpu runtime versus error for the above four methods undering Sobolev index $\theta=1.2,1.5,1.8$ and $10$.

As shown, with the exception of LRI1 (\ref{LRI1}), the other three methods demonstrate a significant advantage in computational time, despite some sacrifice in algorithmic simplicity. Furthermore, it can be observed that the two-step method does not perform noticeably worse in runtime than the single-step method. It is due to the cancellation of certain higher-order terms during the construction of the symmetric method.

\begin{figure}[t!]
    \centering
    \begin{minipage}{0.45\textwidth}
        \centering
        \includegraphics[width=\textwidth]{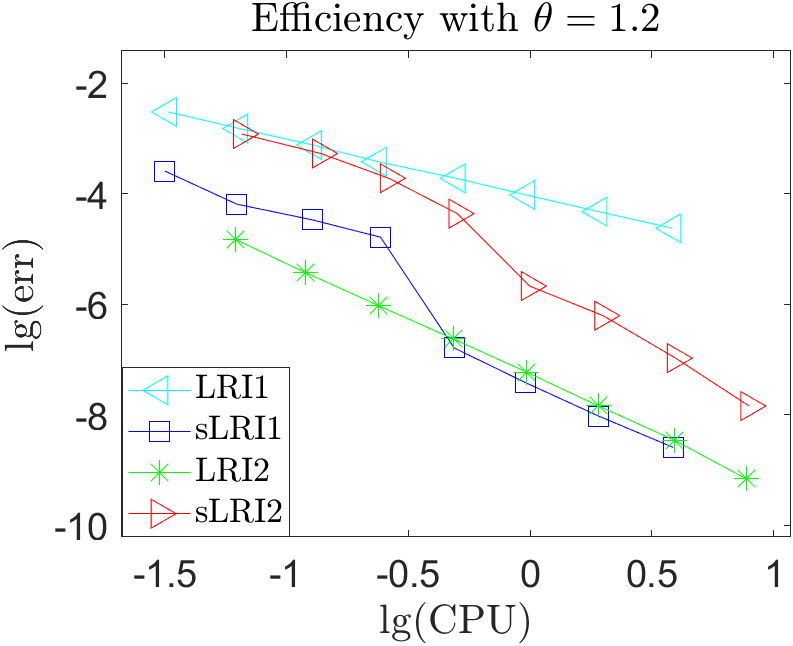}
         
    \end{minipage}
    \hfill
    \begin{minipage}{0.45\textwidth}
        \centering
        \includegraphics[width=\textwidth]{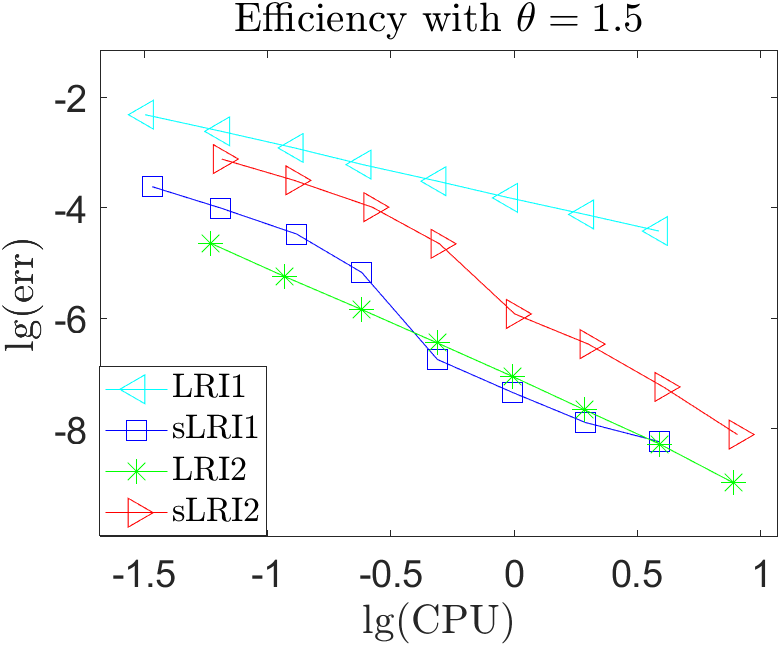}
         
    \end{minipage}
    
    \caption{Comparison of CPU time versus error under initial conditions in $\theta=1.2$  and $\theta=1.5$.}
    \label{CPU 1.2 1.5}
\end{figure}

\begin{figure}[t!]
    \centering
    \begin{minipage}{0.45\textwidth}
        \centering
        \includegraphics[width=\textwidth]{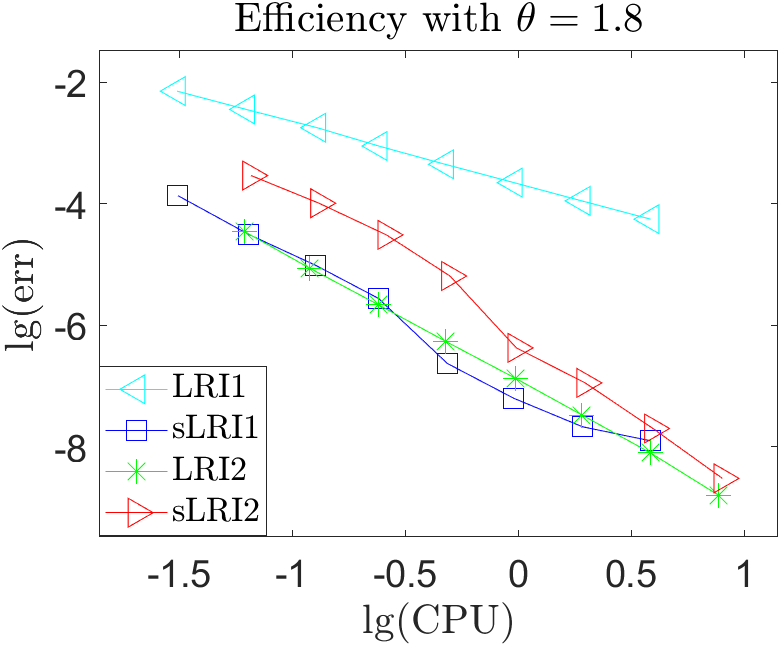}
         
    \end{minipage}
    \hfill
    \begin{minipage}{0.45\textwidth}
        \centering
        \includegraphics[width=\textwidth]{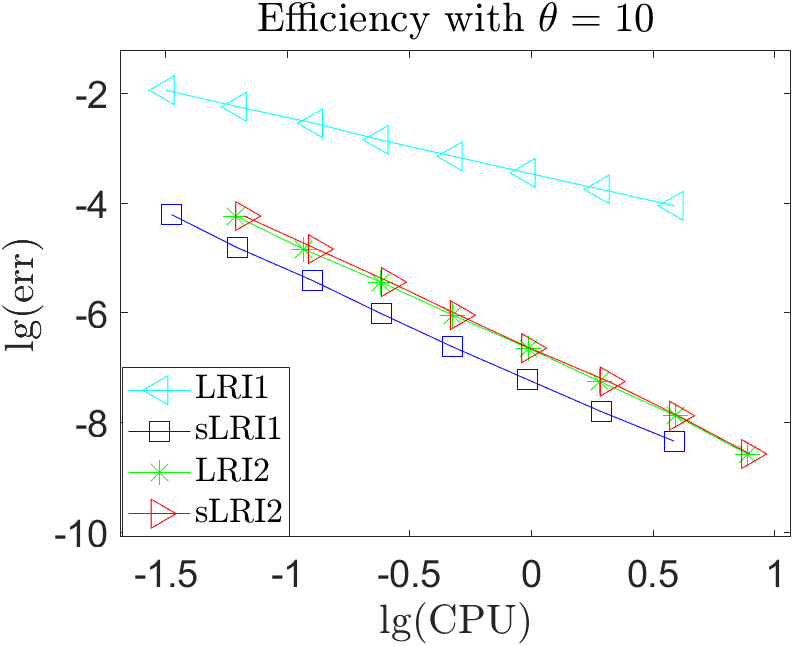}
         
    \end{minipage}
    
    \caption{Comparison of CPU time versus error under initial conditions in $\theta=1.8$  and $\theta=10$ .}
    \label{CPU 1.8 10}
\end{figure}

This observed computational advantage stems from the cancellation of higher-order terms during the symmetric construction process, clearly underscoring the efficiency merits of our designed symmetric two-step method.

\textbf{Long time conservations}.

Let 
\begin{equation}
    H^n(t):=\frac12\int_{\mathbb T}(|\partial_tu^n|^2+|\partial_xu^n|^2+2\cos u)\ \dd x
\end{equation}
be the global energy of (\ref{numerical}). We examine the long-time energy conservation properties of sLRI1 (\ref{sLRI1}) and sLRI2 (\ref{sLRI2}). In the numerical experiments, we set the time-step $h=0.1$ and the patial discrete mesh $M=2^7$. We use $\frac{H(T_{\mathrm{end}})-H(0)}{H(0)}$ as the relative errors of discrete energy over a long interval with initial data $(u^0(x)/10,v^0(x)/10)$ lying on different Sobolev spaces. 

We first assume $T_{\mathrm{end}} = 2000$. The comparison in figure \ref{1.5 0.1} and \ref{1.7 0.1} shows that, with the low regularity initial data $\theta=1.5$ and $\theta=1.7$, the symmetric scheme sLRI2 exhibits approximate energy conservation over long time scales, whereas the non-conservative scheme (\ref{LBY LR2})  loses long-term performance in the energy conservation after $t>300$. Similarly, sLRI1 also demonstrates long-time approximate energy conservation and its relative error is also smaller than that of LRI1.

Furthmore, according to the \cite[Theorem 3]{Cohen2008}, symmetric algorithms exhibit better long-time energy conservation behavior for smooth initial data. Figure \ref{energy smooth} displays the energy conservation properties of sLRI1 and sLRI2 under finial time $T_{\mathrm{end}} =5000$ and the smooth initial data
\begin{equation*}
    \begin{aligned}
        \tilde{u}_0(x) &= \frac{1}{10} \sqrt{\frac{2a}{b}} \,\mathrm{sech}(r x),\\
        \tilde{v}_0(x) &= \frac{c}{10}   \sqrt{\frac{2a}{b}} \,r\,\mathrm{sech}(r x) \tanh(r x),
    \end{aligned}
\end{equation*}
with \(r = \sqrt{\frac{a}{a^2 - c^2}}\). As can be seen, the symmetric methods maintain excellent energy conservation even over very long time scales.

\begin{figure}[t!]
    \centering
    \begin{minipage}{0.45\textwidth}
        \centering
        \includegraphics[width=\textwidth]{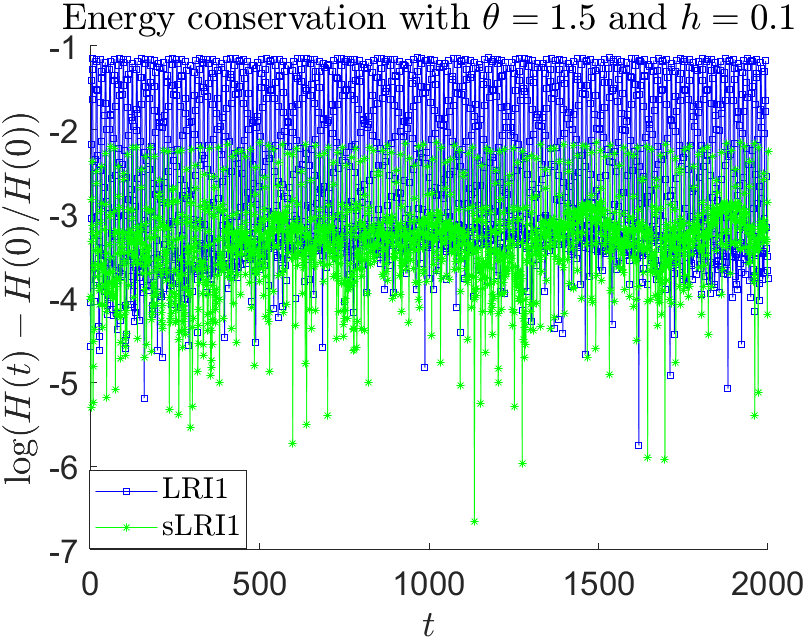}
         
    \end{minipage}
    \hfill
    \begin{minipage}{0.45\textwidth}
        \centering
        \includegraphics[width=\textwidth]{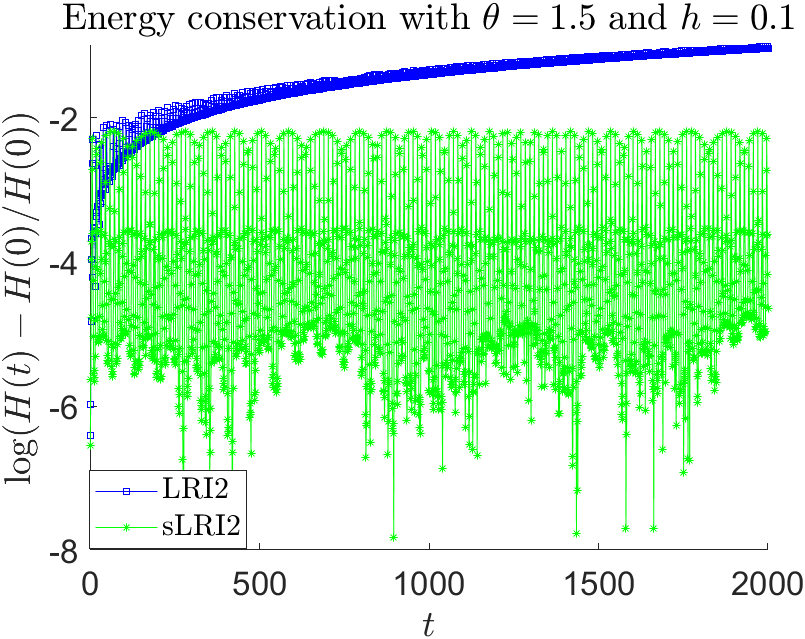}
          
    \end{minipage}
    
    \caption{Relative errors of sLRI1  and sLRI2  for (\ref{numerical}) with time step $h=0.1$, $T_{\mathrm{end}}=2000$ and $H^{1.5}(\Bbb T)\times H^{0.5}(\Bbb T)$ initial datum.}
    \label{1.5 0.1}
\end{figure}

\begin{figure}[t!]
    \centering
    \begin{minipage}{0.45\textwidth}
        \centering
        \includegraphics[width=\textwidth]{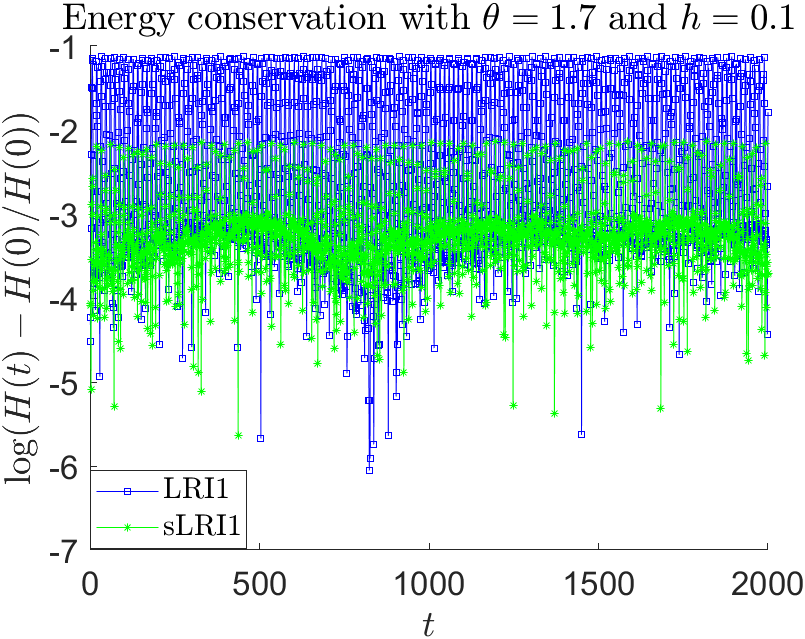}
         
    \end{minipage}
    \hfill
    \begin{minipage}{0.45\textwidth}
        \centering
        \includegraphics[width=\textwidth]{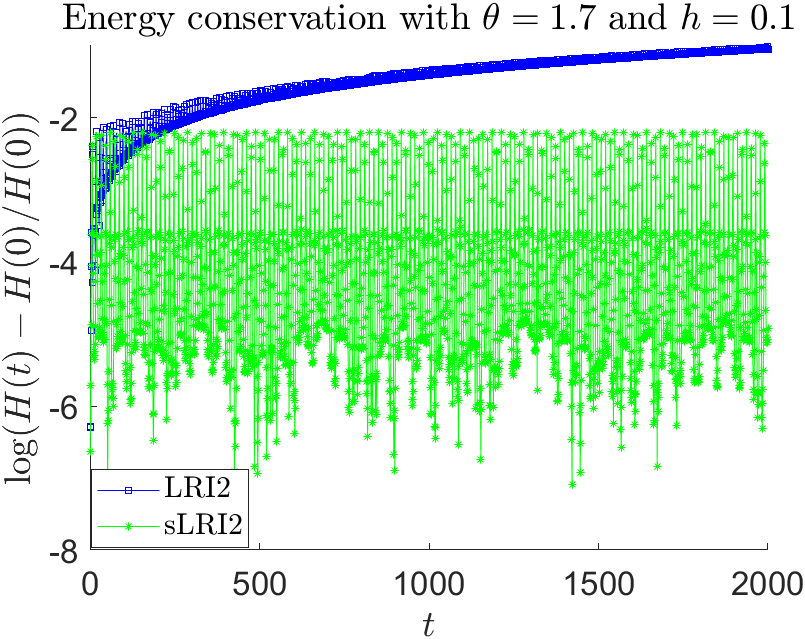}
          
    \end{minipage}
    
    \caption{Relative errors of sLRI1  and sLRI2  for (\ref{numerical}) with time step $h=0.1$, $T_{\mathrm{end}}=2000$ and $H^{1.7}(\Bbb T)\times H^{0.7}(\Bbb T)$ initial datum.}
    \label{1.7 0.1}
\end{figure}

\begin{figure}[t!]
    \centering
    \begin{minipage}{0.45\textwidth}
        \centering
        \includegraphics[width=\textwidth]{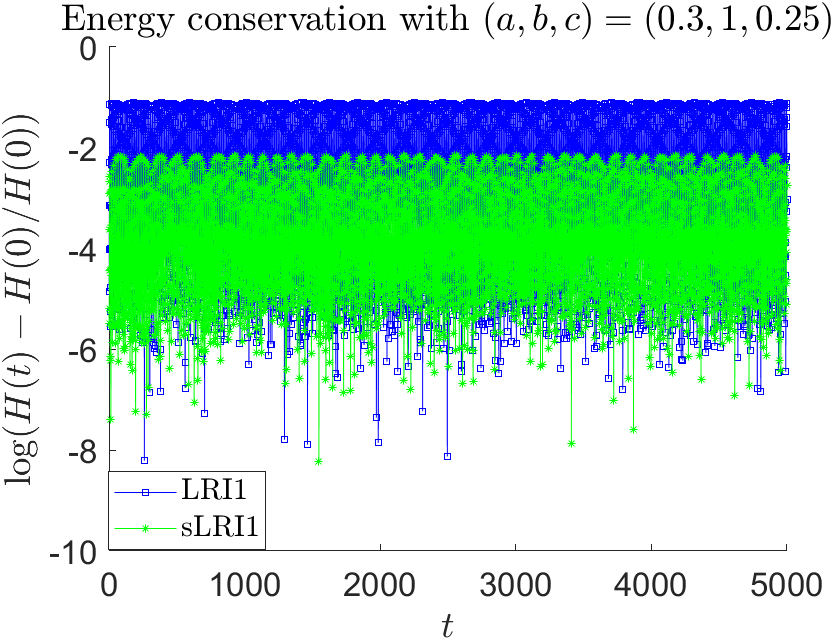}
          
    \end{minipage}
    \hfill
    % 右侧图片，手动添加 
    \begin{minipage}{0.45\textwidth}
        \centering
        \includegraphics[width=\textwidth]{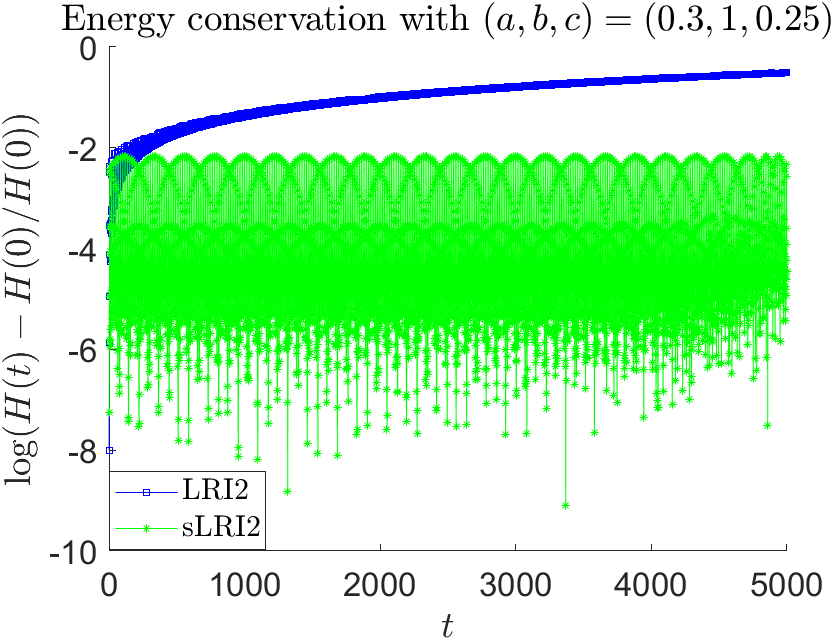}
          
    \end{minipage}
    
    \caption{Relative errors of sLRI1 and sLRI2  for (\ref{numerical}) with time step $h=0.1$, $T_{\mathrm{end}}=5000$, $(a,b,c)=(0.3,1,0.25)$ in $\tilde{u}_0(x)$ and $\tilde{v}_0(x)$.}
    \label{energy smooth}
\end{figure}

\section{Conclusion}
In this paper, we formulate and analyze a class of explicit symmetric low-regularity exponential integrator for the Klein-Gordon equation. This class of integrators maintains high accuracy and computational efficiency even when applied to initial data with low regularity. Furthermore, owing to its symmetry, the method exhibits excellent long-term energy conservation properties. We also demonstrate that the construction strategy for such symmetric exponential integrators is applicable to other nonlinear wave equations. Consequently, we believe that this approach paves the way for the development of a broader class of explicit symmetric low-regularity exponential integrators.

\section*{Competing interests}
We declare that we have no conflict of interest.
\section*{Funding}
This work was supported partially by the  National Natural Science Foundation of China (Grant No. 12371403).

%\clearpage
\bibliography{reference}% common bib file

@article{Rousset2021,
  author  = {Fr\'ed\'eric Rousset and Katharina Schratz},
  title   = {A general framework of low-regularity integrators},
  journal = {SIAM Journal on Numerical Analysis},
  year    = {2021},
  volume  = {59},
  number  = {3},
  pages   = {1735--1768},
  doi     = {10.1137/20M1361917}
}

@article{Li2023,
  author  = {Buyang Li and Katharina Schratz and Franco Zivcovich},
  title   = {A second-order low-regularity correction of {Lie} splitting for the semilinear {Klein--Gordon} equation},
  journal = {ESAIM: Mathematical Modelling and Numerical Analysis},
  year    = {2023},
  volume  = {57},
  number  = {3},
  pages   = {899--919},
  doi     = {10.1051/m2an/2023015}
}

@article{Cohen2008,
  author  = {David Cohen and Ernst Hairer and Christian Lubich},
  title   = {Conservation of energy, momentum and actions in numerical discretizations of nonlinear wave equations},
  journal = {Numerische Mathematik},
  year    = {2008},
  volume  = {110},
  number  = {2},
  pages   = {113--143},
  doi     = {10.1007/s00211-008-0162-x}
}

@article{Chen2020,
  author  = {Chuchu Chen and Jialin Hong and Chol Sim and Kwang Sonwu},
  title   = {Energy and quadratic invariants preserving ({EQUIP}) multi-symplectic methods for Hamiltonian {Klein--Gordon} equations},
  journal = {Journal of Computational Physics},
  year    = {2020},
  volume  = {418},
  pages   = {109599},
  doi     = {10.1016/j.jcp.2020.109599}
}

@book{Hairer2006,
  author    = {Ernst Hairer and Christian Lubich and Gerhard Wanner},
  title     = {Geometric Numerical Integration: Structure-Preserving Algorithms for Ordinary Differential Equations},
  journal = {Computers & Mathematics with Applications},
  volume = {45},
  number = {10},
  pages = {1782-1784},
  year = {2003},
  issn = {0898-1221},
  doi = {10.1016/S0898-1221(03)80155-4}
}

@article{Buchholz2021,
  author  = {Simone Buchholz and Benjamin Dorich and Marlis Hochbruck},
  title   = {On averaged exponential integrators for semilinear {Klein--Gordon} equations with solutions of low-regularity},
  journal = {SN Partial Differential Equations and Applications},
  year    = {2021},
  volume  = {2},
  number  = {4},
  pages   = {2662--2963},
  doi     = {10.1007/s42985-021-00095-7}
}

@article{Wang2019,
  author  = {Bin Wang and Xinyuan Wu},
  title   = {Global error bounds of one-stage extended {RKN} integrators for semilinear {Klein--Gordon} equations},
  journal = {Numerical Algorithms},
  year    = {2019},
  volume  = {81},
  number  = {3},
  pages   = {1203--1218},
  doi     = {10.1007/s11075-018-0583-2}
}

@article{Bao2014,
  author  = {Weizhu Bao and Yongyong Cai and Xiaofei Zhao},
  title   = {A uniformly accurate multiscale time integrator pseudospectral method for the {Klein--Gordon} equation in the nonrelativistic limit regime},
  journal = {SIAM Journal on Numerical Analysis},
  year    = {2014},
  volume  = {52},
  number  = {5},
  pages   = {2488--2511},
  doi     = {10.1137/130943371}
}

@article{Ostermann2018,
  author  = {Alexander Ostermann and Katharina Schratz},
  title   = {Low regularity exponential-type integrators for semilinear {Schr\"odinger} equations},
  journal = {Foundations of Computational Mathematics},
  year    = {2018},
  volume  = {18},
  number  = {3},
  pages   = {731--755},
  doi     = {10.1007/s10208-017-9352-1}
}

@book{Pazy1983,
  author    = {A. Pazy},
  title     = {Semigroups of Linear Operators and Applications to Partial Differential Equations},
  series    = {Applied Mathematical Sciences},
  volume    = {44},
  publisher = {Springer-Verlag},
  address   = {New York},
  year      = {1983},
  isbn      = {978-0-387-90845-8},
  doi       = {10.1007/978-1-4612-5561-1}
}

@book{EngelNagel2000,
  author    = {Klaus-Jochen Engel and Rainer Nagel},
  title     = {One-Parameter Semigroups for Linear Evolution Equations},
  series    = {Graduate Texts in Mathematics},
  volume    = {194},
  publisher = {Springer-Verlag},
  address   = {New York},
  year      = {2000},
  isbn      = {0-387-98463-1}
}

@article{Kato1967,
  author  = {Tosio Kato},
  title   = {Nonlinear Semigroups and Evolution Equations},
  journal = {Journal of the Mathematical Society of Japan},
  year    = {1967}
}

@book{Bahouri2011,
  title     = {Fourier Analysis and Nonlinear Partial Differential Equations},
  author    = {Bahouri, Hajer and Chemin, Jean-Yves and Danchin, Rapha{\"e}l},
  series    = {Grundlehren der mathematischen Wissenschaften},
  volume    = {343},
  year      = {2011},
  publisher = {Springer},
  address   = {Berlin, Heidelberg},
  isbn      = {978-3-642-16829-1},
  doi       = {10.1007/978-3-642-16830-7}
}

@article{KatoPonce1988,
  author    = {Kato, Tosio and Ponce, Gustavo},
  title     = {Commutator estimates and the {Euler} and {Navier--Stokes} equations},
  journal   = {Communications in Pure and Applied Mathematics},
  volume    = {41},
  number    = {7},
  pages     = {891--907},
  year      = {1988},
  publisher = {Wiley},
  doi       = {10.1002/cpa.3160410704}
}

@article{Feng2024,
  title   = {Explicit symmetric low-regularity integrators for the nonlinear {Schr\"odinger} equation},
  author  = {Feng, Yue and Maierhofer, Georg and Wang, Chushan},
  journal = {SIAM Journal on Scientific Computing},
  year    = {2025},
  volume    = {47},
  number    = {4},
  pages     = {A2154-A2179},
}

@article{Yi2020Optimal,
  author = {Yi, Wenfan and Cai, Yongyong},
  title = {Optimal error estimates of finite difference time domain methods for the {Klein--Gordon--Dirac} system},
  journal = {IMA Journal of Numerical Analysis},
  volume = {40},
  number = {2},
  pages = {1266--1293},
  year = {2020},
  doi = {10.1093/imanum/dry084},
}

@article{Cai2018Error,
  author = {Cai, Yong-Yong and Yi, Wenfan},
  title = {Error estimates of finite difference time domain methods for the {Klein--Gordon--Dirac} system in the nonrelativistic limit regime},
  journal = {Communications in Mathematical Sciences},
  volume = {16},
  number = {5},
  pages = {1325--1346},
  year = {2018},
  doi = {10.4310/CMS.2018.v16.n5.a7},
}

@article{Bao2012Analysis,
  author = {Bao, Weizhu and Dong, Xuanchun},
  title = {Analysis and comparison of numerical methods for the {Klein--Gordon} equation in the nonrelativistic limit regime},
  journal = {Numerische Mathematik},
  volume = {120},
  number = {2},
  pages = {189--229},
  year = {2012},
  doi = {10.1007/s00211-011-0411-2},
}

@article{Kropielnicka2024Effective,
  author = {Kropielnicka, Karolina and Lademann, Karolina and Schratz, Katharina},
  title = {Effective highly accurate time integrators for linear {Klein--Gordon} equations across the scales},
  journal = {Journal of Numerical Mathematics},
  year = {2024},
}

@article{Baumstark2018Uniformly,
  author = {Baumstark, Simon and Faou, Erwan and Schratz, Katharina},
  title = {Uniformly accurate exponential-type integrators for {Klein--Gordon} equations with asymptotic convergence to the classical NLS splitting},
  journal = {Mathematics of Computation},
  volume = {87},
  number = {311},
  pages = {1227--1254},
  year = {2018},
  doi = {10.1090/mcom/3268},
}

@article{Wang2022Symmetric,
  title = {A Symmetric Low-Regularity Integrator for Nonlinear {{Klein}}-{{Gordon}} Equation},
  author = {Wang, Yan and Zhao, Xiaofei},
  year = {2022},
  month = {7},
  journal = {Mathematics of Computation},
  volume = {91},
  number = {337},
  pages = {2215--2245},
  publisher = {{American Mathematical Society (AMS)}},
  doi = {10.1090/mcom/3751},
}

@article{Li2020Linearly,
  title = {Linearly Implicit and High-Order Energy-Conserving Schemes for Nonlinear Wave Equations},
  author = {Li, Dongfang and Sun, Weiwei},
  year = {2020},
  journal = {Journal of Scientific Computing},
  volume = {83},
  number = {3},
  pages = {65},
  doi = {10.1007/s10915-020-01245-6},
}

@article{Hairer2000Long,
  title = {Long-Time Energy Conservation of Numerical Methods for Oscillatory Differential Equations},
  author = {Hairer, Ernst and Lubich, Christian},
  year = {2000},
  journal = {SIAM Journal on Numerical Analysis},
  volume = {38},
  number = {2},
  pages = {414--441},
  doi = {10.1137/S0036142999354780},
}

@article{alama2023symmetric,
    author = {Alama Bronsard, Yvonne},
    title = {A symmetric low-regularity integrator for the nonlinear {Schr{\"o}dinger} equation},
    journal = {IMA Journal of Numerical Analysis},
    volume = {44},
    number = {6},
    pages = {3648-3682},
    year = {2023},
    month = {12},
    issn = {0272-4979},
    doi = {10.1093/imanum/drad093},
}

@article{Ning2022mKdV,
  title={An Embedded Exponential-Type Low-Regularity Integrator for the {{mKdV}} Equation},
  author={Ning, Cui and Wu, Yifei and Zhao, Xiaofei},
  journal={SIAM Journal on Numerical Analysis},
  volume={60},
  number={4},
  pages={2071--2094},
  year={2022},
  doi={10.1137/21M1457146}
}

@article{Schratz2020Dirac,
  title = {Low-regularity integrators for nonlinear Dirac equations},
  author = {Schratz, Katharina and Wang, Yan and Zhao, Xiaofei},
  journal = {Mathematics of Computation},
  volume = {90},
  number = {327},
  pages = {189--214},
  year = {2020},
  doi = {10.1090/mcom/3557}
}

@article{Ostermann2020Cubic,
  title = {Error estimates of a Fourier integrator for the cubic Schr\"odinger equation at low regularity},
  author = {Ostermann, Alexander and Rousset, Fr\'ed\'eric and Schratz, Katharina},
  journal = {Foundations of Computational Mathematics},
  year = {2020},
  doi = {10.1007/s10208-020-09468-7}
}
%% if required, the content of .bbl file can be included here once bbl is generated
%%\input sn-article.bbl

\end{document}